\theoremstyle{definition}
\newtheorem*{definition}{Definition}
\newtheorem*{remark}{Remark}
\newtheorem{theorem}{Theorem}[section]
\newtheorem{lemma}[theorem]{Lemma}
\newtheorem{proposition}[theorem]{Proposition}
\begin{document}

\title{On Chudnovsky-Ramanujan Type Formulae}

\author{Imin Chen and Gleb Glebov}

\date{September 2016}

\subjclass[2010]{Primary: 11Y60; Secondary: 14H52, 14K20, 33C05}

\keywords{elliptic curves; elliptic functions; elliptic integrals; Dedekind eta function; Eisenstein series; hypergeometric function; j-invariant; Picard-Fuchs differential equation}

\address{Imin Chen \\
Department of Mathematics \\
Simon Fraser University \\
Burnaby \\
British Columbia \\
CANADA}

\email{ichen@sfu.ca}

\address{Gleb Glebov \\
Department of Mathematics \\
Simon Fraser University \\
Burnaby \\
British Columbia \\
CANADA}

\email{gglebov@sfu.ca}

\thanks{This work was supported by an NSERC Discovery Grant and SFU VPR Bridging Grant.}

\maketitle

\begin{abstract}
In a well-known 1914 paper, Ramanujan gave a number of rapidly converging series for $1/\pi$ which are derived using modular functions of higher level. D. V. and G. V. Chudnovsky in their 1988 paper derived an analogous series representing $1/\pi$ using the modular function $J$ of level 1, which results in highly convergent series for $1/\pi$, often used in practice. In this paper, we explain the Chudnovsky method in the context of elliptic curves, modular curves, and the Picard-Fuchs differential equation. In doing so, we also generalize their method to produce formulae which are valid around any singular point of the Picard-Fuchs differential equation. Applying the method to the family of elliptic curves parameterized by the absolute Klein invariant $J$ of level 1, we determine all Chudnovsky-Ramanujan type formulae which are valid around one of the three singular points: $0, 1, \infty$.
\end{abstract}

\tableofcontents

\section{Introduction}

The formula
\begin{equation}
\label{chudnovsky-formula-pi}
12\sum_{n = 0}^\infty (-1)^n \frac{545140134n + 13591409}{(640320^3)^{n + 1/2}} \frac{(6n)!}{(3n)! n!^3} = \frac{1}{\pi}
\end{equation}
has been discovered by D. V. and G. V. Chudnovsky \cite{Chudnovsky} (see also \cite{Chudnovsky2}) and has been used in practice for a record breaking computation of the digits of $\pi$. In 2013, $12.1 \times 10^{12}$ digits of $\pi$ were calculated by A. J. Yee and S. Kondo using the Chudnovsky series for $\pi$. Ramanujan \cite{Ramanujan2} was the first to give examples of such formulae.

The derivation of formulae like \eqref{chudnovsky-formula-pi} has traditionally involved specialized knowledge of identities of classical functions and modular functions (see, for instance, \cite{Borwein}, \cite{Chan}). In particular, the formula \eqref{chudnovsky-formula-pi} is derived by proving the following precursor formula (see Theorem~\ref{thm:infty}) valid for imaginary quadratic $\tau$ in a certain simply-connected domain $C_{1/J, \infty}$ of $\mathbb{C}$,
\begin{equation}
\label{Jinftypre}
\frac{a}{\pi \sqrt{d}} \frac{\sqrt{J}}{\sqrt{J - 1}} = F^2 \frac{1 - s_2(\tau)}{6} - J \frac{d}{dJ} F^2,
\end{equation}
where $F = {}_{2}F_{1}(1/12, 5/12; 1; 1/J)$, $J = J(\tau)$, and
\begin{equation}
\label{Chudnovsky}
\tau = \frac{-b + \sqrt{-d}}{2a}, \quad a > 0, \quad -d = b^2 - 4ac
\end{equation}
such that $a$, $b$, $c$ are integers. For $\tau$ in the upper-half plane, $J(\tau) = j(\tau)/12^3$ is the absolute Klein invariant, $j(\tau)$ is the classical $j$-invariant, and $s_2(\tau)$ is defined by
$$s_2(\tau) = \frac{E_4(\tau)}{E_6(\tau)} \bigg(E_2(\tau) - \frac{3}{\pi {\rm Im}(\tau)}\bigg).$$
Here, $E_2(\tau)$, $E_4(\tau)$, $E_6(\tau)$ are the normalized Eisenstein series (see Section 2.4). It is known that $j(\tau)$ is rational for $\tau$ corresponding to imaginary quadratic orders of class number one. Evaluation at such $\tau$ and using Clausen's identity leads to \eqref{chudnovsky-formula-pi}.

The purpose of this paper is to explain the Chudnovsky method \cite{Chudnovsky} in detail and in a way that links it to the modern theory of elliptic and modular curves. Secondly, we generalize the method to work for monodromy around any elliptic point, in addition to the cusps. In doing so, we give a fairly transparent but broad framework which in principle suggests a systematic way of tabulating such formulae for genus zero congruence groups commensurable with ${\rm SL}_2(\mathbb{Z})$, or at least those which are triangular. A feature of this method is that it no longer requires specialized knowledge of classical functions and identities, but rather an explicit form of the Picard-Fuchs differential equation and Kummer's method \cite{Kummer} to determine its hypergeometric solutions \cite{Archinard}.

Even in the case of $J$ of level 1, the method gives new formulae (no longer for $1/\pi$, but related constants), corresponding to monodromy around $J = 0$ and $J = 1$, which are derived in this paper. In particular, we prove the following Chudnovsky-Ramanujan type formulae:

\begin{theorem}
\label{thm:one}
If $\tau$ is as in \eqref{Chudnovsky} and lies in $C_{(J - 1)/J, i}$, then
$$\frac{\tau + i}{2\pi \alpha^2 \sqrt{3}} \frac{\sqrt{J}}{\sqrt{1 - J}} \bigg(a\frac{\tau + i}{\sqrt{-d}} - 1\bigg) = F^2 \frac{1 - s_2(\tau)}{6} - J\frac{d}{dJ} F^2,$$
where $F = {}_{2}F_{1}(1/12, 5/12; 1/2; (J - 1)/J)$, $J = J(\tau)$, $\alpha = 2i \eta(i)^2$, and the principal branch of the square root is used. Here, $\eta(\tau)$ is the Dedekind eta function.
\end{theorem}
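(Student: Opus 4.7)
The plan is to imitate, at the singular point $J=1$, the derivation of the precursor identity \eqref{Jinftypre} carried out in Theorem~\ref{thm:infty} at the cusp $J=\infty$. The only modifications required are (i) choosing the local hypergeometric solution of the Picard-Fuchs equation appropriate to $J=1$ and (ii) normalizing the period at the elliptic fixed point $\tau=i$ in place of the cusp.

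First I would apply Kummer's method to select, from the twenty-four hypergeometric representations of the solutions to the level-one Picard-Fuchs equation, the one that is holomorphic at $J=1$: this is precisely $F={}_2F_1(1/12,5/12;1/2;(J-1)/J)$, in which the parameter $1/2$ reflects the order-two monodromy at $\tau=i$ and accounts for the $\sqrt{J}/\sqrt{1-J}$ factor on the left-hand side. Next, following the same argument that produces \eqref{Jinftypre}, I would form the bilinear combination $F^2(1-s_2(\tau))/6-J\,dF^2/dJ$. Translating from $J$ to $\tau$ via the chain rule and using the quasi-modular transformation of $E_2$ that is encoded in $s_2$, this combination is identified with a ratio of periods of the universal elliptic curve at $\tau$ to those at $\tau=i$, where the latter evaluates explicitly in terms of $\eta(i)^2$ and produces the constant $\alpha=2i\eta(i)^2$ appearing in the statement.

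To obtain the stated left-hand side, I would then specialize to imaginary quadratic $\tau$ of the form \eqref{Chudnovsky}. The CM action of $\mathbb{Z}[i]$ relates the period at such $\tau$ to the period at $i$; carrying out this comparison by means of the substitution $\tau\mapsto \tau+i$ turns the conjugate period into the linear factor $a(\tau+i)/\sqrt{-d}-1$, while the remaining $(\tau+i)/(2\pi\alpha^2\sqrt{3})$ arises from the Jacobian of that change of variable together with the explicit value of the holomorphic period at $\tau=i$.

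I expect the main obstacle to be the branch analysis and the sharp pinning down of constants at the elliptic point. At the cusp the $q$-expansion supplies a canonical local parameter and the period degenerates transparently; at $\tau=i$ one must instead choose a branch of $\sqrt{(J-1)/J}$, distinguish the holomorphic Picard-Fuchs solution from its logarithmically singular partner, and verify that no stray factor of $i$, of $\sqrt{3}$ (coming from the ratio of contributions of $E_4$ and $E_6$ at $\tau=i$), or of $2$ is dropped in the passage from the modular identity to the form stated in terms of $\alpha$. The verification of the overall constant ultimately reduces to an exact evaluation of $\eta(i)$, accessible through the Chowla-Selberg formula.
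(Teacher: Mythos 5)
Your overall skeleton is the right one and matches the paper's strategy: take the Kummer/Archinard hypergeometric solution of the Picard--Fuchs equation attached to $J=1$, namely $\widetilde{\omega}_1=\frac{2\pi\alpha}{\tau+i}J^{-1/12}F$ with $F={}_2F_1(1/12,5/12;1/2;(J-1)/J)$ and $\alpha=2i\eta(i)^2$, combine it with the quasi-period relation $\widetilde{\eta}_1=-2\sqrt{3}J^{2/3}\sqrt{J-1}\,d\widetilde{\omega}_1/dJ$ and the Legendre-type relation \eqref{eq:Chudnovsky} involving $s_2(\tau)$, exactly as in Theorem~\ref{thm:infty}. The branch issue you flag is also handled in the paper exactly as you anticipate would be needed (a connectedness argument plus one numerical check at $\tau=\sqrt{-2}$).

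However, there is a genuine gap in the step where you account for the factors $\tau+i$ and $a\frac{\tau+i}{\sqrt{-d}}-1$. These do not come from a ``CM action of $\mathbb{Z}[i]$'' or from the Jacobian of a substitution $\tau\mapsto\tau+i$ (a translation, whose Jacobian is $1$; moreover the theorem applies to arbitrary imaginary quadratic $\tau$ in $C_{(J-1)/J,i}$, e.g.\ $\tau=\sqrt{-2}$, which has nothing to do with $\mathbb{Z}[i]$). The new difficulty relative to the $J=\infty$ case is that the period formula at $J=1$ carries the $\tau$-dependent prefactor $1/(\tau+i)$, so when you compute $d\widetilde{\omega}_1/dJ$ the chain rule produces an extra term $\frac{1}{(\tau+i)J'}F$. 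The paper's proof must then algebraize this term: Ramanujan's relation \eqref{ramanujan-diff}, $J'/J=-2\pi i\,E_6(\tau)/E_4(\tau)$, converts it into a multiple of $\frac{F^2}{\tau+i}\frac{E_4}{E_6}$ (Theorem~\ref{thm:intermediate1}), and Proposition~\ref{prop:Eisenstein}, $\frac{E_4}{E_6}=\frac{2\pi^2}{9\widetilde{\omega}_1^2}J^{1/3}\frac{\sqrt{27}}{\sqrt{J-1}}$, together with a second use of the hypergeometric expression for $\widetilde{\omega}_1^2$, turns it into an explicit algebraic multiple of $(\tau+i)\sqrt{J}/\sqrt{J-1}$. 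Only after this does it combine with the $a(\tau+i)^2$ term from \eqref{eq:Chudnovsky} to produce the factor $a\frac{\tau+i}{\sqrt{-d}}-1$. Without identifying and carrying out this conversion of the $1/(\tau_0 J')$ term, your plan cannot reach the stated left-hand side.
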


\begin{theorem}
\label{thm:zero}
If $\tau$ is as in \eqref{Chudnovsky} and lies in $C_{J/(J - 1), \rho}$, then
$$-\frac{\tau - \overline{\rho}}{2\pi \alpha^2 \sqrt{3}} \frac{J^{1/3}}{(1 - J)^{1/3}} \bigg(a\frac{\tau - \overline{\rho}}{\sqrt{-d}} - 1\bigg) = F^2 \bigg[\frac{J}{6(1 - J)} + \frac{s_2(\tau)}{6}\bigg] + J\frac{d}{dJ} F^2,$$
where $F = {}_{2}F_{1}(1/12, 7/12; 2/3; J/(J - 1))$, $J = J(\tau)$, $\alpha = i \eta(\rho)^2 \sqrt{3}$, and the principal branch of the square and cube root is used. Here, $\eta(\tau)$ is the Dedekind eta function and $\rho = e^{2\pi i/3}$ is the cube root of unity.
\end{theorem}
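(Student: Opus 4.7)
The plan is to apply the general Chudnovsky method of the paper (as used for Theorems~\ref{thm:infty} and~\ref{thm:one}) to the singular point $J = 0$ of the Picard-Fuchs equation, which via the uniformization $\tau \mapsto J(\tau)$ lifts to the elliptic point $\tau = \rho$ of order $3$. At this point the Picard-Fuchs equation has local exponents $(0, 1/3)$, and Kummer's $24$ hypergeometric solutions contain a pair adapted to the local parameter $J/(J - 1)$ on the simply-connected domain $C_{J/(J - 1), \rho}$: up to an algebraic Pfaff prefactor of the form $(1 - J)^{-1/12}$, one of them is the function $F = {}_2F_1(1/12, 7/12; 2/3; J/(J - 1))$, while the second local solution differs by the factor $(J/(J - 1))^{1/3}$ dictated by the exponent $1/3$. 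The occurrence of $J^{1/3}/(1 - J)^{1/3}$ on the left-hand side of the theorem is a ratio of these two solutions, evaluated suitably.

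With this local basis fixed, I would identify $F^2$ (times the Pfaff prefactor) with a constant multiple of $\omega^2$ for a suitably normalized period $\omega$ of the universal elliptic curve; the normalizing constant $\alpha = i \eta(\rho)^2 \sqrt{3}$ is pinned down by the Chowla-Selberg type evaluation at $\tau = \rho$, with the factor $\sqrt{3}$ recording the ramification index $3$ of $\tau \mapsto J$ at $\rho$. I would then substitute $F^2 = \omega^2/\alpha^2$ into the paper's master identity, which expresses a specific combination of a period and its associated quasi-period---the latter encoded through $s_2(\tau) = (E_4/E_6)(E_2 - 3/(\pi\, {\rm Im}(\tau)))$---as $F^2 \cdot (\text{stuff}) + J \frac{d}{dJ} F^2$ via the Legendre relation. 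Using ${\rm Im}(\tau) = \sqrt{d}/(2a)$ for $\tau$ as in \eqref{Chudnovsky}, and the interpretation of $\tau - \overline{\rho}$ as the natural linearization at $\rho$ of the local uniformizer, the stated identity then falls out by specialization to imaginary quadratic $\tau$.

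The main obstacle will be the bookkeeping of branches and prefactors: tracking $J^{1/3}$ and $(1 - J)^{1/3}$ under the principal-branch convention along $C_{J/(J - 1), \rho}$, and checking that the derivative of the Pfaff prefactor $(1 - J)^{-1/12}$ together with the cube-root factor produces exactly the bracket $J/(6(1 - J)) + s_2(\tau)/6$ on the right-hand side (rather than the $(1 - s_2(\tau))/6$ of Theorem~\ref{thm:infty}), with the sign of the $J \frac{d}{dJ} F^2$ term flipped accordingly. A final consistency check is that $E_6(\rho) = 0$, so $s_2$ has a pole at $\tau = \rho$; this is harmless because the identity is asserted at imaginary quadratic $\tau$ other than $\rho$ itself in $C_{J/(J - 1), \rho}$, where $s_2(\tau)$ is finite.
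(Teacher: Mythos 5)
Your overall strategy is the paper's own: express $\widetilde{\omega}_1$ hypergeometrically near $J = 0$ via Kummer/Archinard, obtain the quasi-period by differentiating in $J$ (Theorem~\ref{thm:quasi}), and feed both into the Legendre/CM relation \eqref{eq:Chudnovsky} involving $s_2(\tau)$. However, there is a concrete gap in how you propose to produce the left-hand side. You write that you would ``identify $F^2$ (times the Pfaff prefactor) with a constant multiple of $\omega^2$,'' but the period formula of Theorem~\ref{thm:Jzero} is
$$\widetilde{\omega}_1 = \frac{2\pi\alpha}{\tau - \overline{\rho}}\,(1 - J)^{-1/12}\, F,$$
and the factor $(\tau - \overline{\rho})^{-1}$ is \emph{not} constant in $J$. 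Differentiating it (via $d\tau/dJ = 1/J'$ wherever $J' \ne 0$) produces an extra term proportional to $F/(\tau_0 J')$ with $\tau_0 = \tau - \overline{\rho}$, which by \eqref{ramanujan-diff} becomes a term $\frac{F^2}{\tau_0}\frac{E_4(\tau)}{E_6(\tau)}$ in the resulting identity (this is the paper's Theorem~\ref{thm:intermediate0}). That term must then be eliminated by re-expressing $E_4/E_6$ in terms of $\widetilde{\omega}_1^{-2}$ (Proposition~\ref{prop:Eisenstein}) and substituting the hypergeometric formula for $\widetilde{\omega}_1^{2}$ once more, so that the $F^2$ cancels and one is left with the constant contribution that produces the ``$-1$'' inside $\bigl(a\frac{\tau - \overline{\rho}}{\sqrt{-d}} - 1\bigr)$. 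Your proposal attributes the appearance of $\tau - \overline{\rho}$ to ``the natural linearization at $\rho$ of the local uniformizer,'' which does not supply this mechanism; without it you cannot obtain the correct left-hand side, and this step --- not the branch bookkeeping you flag --- is the genuine difficulty distinguishing the elliptic-point cases from the cusp case $J = \infty$.

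A smaller factual slip in your consistency check: it is $E_4(\rho) = 0$ (and $E_6(i) = 0$), not $E_6(\rho) = 0$, so $s_2$ does not acquire a pole at $\rho$ from a vanishing $E_6$. This does not affect the theorem, which is evaluated at imaginary quadratic $\tau \ne \rho$, but the intermediate identity carries an explicit $E_4/E_6$ factor, and it is $E_4$, not $E_6$, that vanishes at the order-$3$ elliptic point.
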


The simply-connected domains $C_{1/J, \infty}$, $C_{(J - 1)/J, i}$, $C_{J/(J - 1), \rho}$ are defined and depicted in Section \ref{connected-defn}. In Section \ref{one:examples} and \ref{zero:examples}, we list all possible identities obtained by evaluating the above formulae at $\tau$ corresponding to an imaginary quadratic order of class number one.

\begin{remark}
It is known that
$$\eta(i) = \frac{\Gamma(1/4)}{2\pi^{3/4}},$$
where $\Gamma(z)$ is the gamma function \cite[p. 481, 517]{Watson}. However, the value of $\eta(\rho)$ is not as well-known as $\eta(i)$. We prove in Section \ref{zero:examples} that
$$\eta(\rho)^2 = \frac{3^{1/4} \Gamma(1/3)^3}{4\pi^2 e^{\pi i/12}}.$$
\end{remark}

\section{Preliminaries}

In this section, we recall some basic definitions and facts we need later in the paper.

\subsection{Singular values}

It is known from the theory of elliptic curves with complex multiplication that $j(\tau)$ and $J(\tau)$ are rational for $\tau = \sqrt{-N}$ if $N = 1, 2, 3, 4, 7$ and also for $\tau = \frac{-1 + \sqrt{-N}}{2}$ if $N = 3, 7, 11, 19, 27, 43, 67, 163$ \cite[pp. 237-238]{Cox}. Below are tables giving these rational values.

\begin{table}[H]
\centering
\begin{tabular}{ccc}
\hline
$N$ & $j(\tau)$ & $J(\tau)$ \\
\hline
$1$ & $12^3$ & $1$ \\
$2$ & $20^3$ & $5^3/3^3$ \\
$3$ & $2 \cdot 30^3$ & $5^3/2^2$ \\
$4$ & $66^3$ & $11^3/2^3$ \\
$7$ & $255^3$ & $5^3 17^3/2^6$ \\
\hline
\end{tabular}
\caption{Special values of $j(\tau)$ and $J(\tau)$ at $\tau = \sqrt{-N}$}
\end{table}

\begin{table}[H]
\centering
\begin{tabular}{ccc}
\hline
$N$ & $j(\tau)$ & $J(\tau)$ \\
\hline
$3$ & $0$ & $0$ \\
$7$ & $-15^3$ & $-5^3/4^3$ \\
$11$ & $-32^3$ & $-8^3/3^3$ \\
$19$ & $-96^3$ & $-8^3$ \\
$27$ & $-3 \cdot 160^3$ & $-40^3/3^2$ \\
$43$ & $-960^3$ & $-80^3$ \\
$67$ & $-5280^3$ & $-440^3$ \\
$163$ & $-640320^3$ & $-53360^3$ \\
\hline
\end{tabular}
\caption{Special values of $j(\tau)$ and $J(\tau)$ at $\tau = \frac{-1 + \sqrt{-N}}{2}$}
\end{table}

\subsection{The hypergeometric function}

The Pochhammer symbol is defined by
$$(\alpha)_n = \alpha(\alpha + 1)(\alpha + 2) \cdots (\alpha + n - 1),$$
where $n$ is a positive integer and $(\alpha)_0 = 1$, and it is easy to show that $(\alpha)_n = \Gamma(\alpha + n)/\Gamma(\alpha)$, where $\Gamma(z)$ is the gamma function.

\begin{definition}
Let $z \in \mathbb{C}$. The hypergeometric function is defined by
$${}_{2}F_{1}(a, b; c; z) = \sum_{n = 0}^\infty \frac{(a)_n (b)_n}{(c)_n} \frac{z^n}{n!},$$
where $a$, $b$, $c$ are constants.
\end{definition}

\begin{definition}
Let $z \in \mathbb{C}$. The generalized hypergeometric function is defined by
$${}_{3}F_{2}(a_1, a_2, a_3; b_1, b_2; z) = \sum_{n = 0}^\infty \frac{(a_1)_n (a_2)_n (a_3)_n}{(b_1)_n (b_2)_n} \frac{z^n}{n!},$$
where $a_1$, $a_2$, $a_3$, $b_1$, $b_2$ are constants.
\end{definition}

It is easy to show that both ${}_{2}F_{1}(a, b; c; z)$ and ${}_{3}F_{2}(a_1, a_2, a_3; b_1, b_2; z)$ converge absolutely if $|z| < 1$.

Now, since ${}_{2}F_{1}(a, b; c; z)$ is a power series, it can be differentiated termwise within its radius of convergence. Hence it is seen that
\begin{equation}
\label{hyperdiff}
\frac{d}{dz} {}_{2}F_{1}(a, b; c; z) = \frac{ab}{c} {}_{2}F_{1}(a + 1, b + 1; c + 1; z).
\end{equation}

The hypergeometric function satisfies the hypergeometric differential equation:
$$z(1 - z) \frac{d^2 y}{dz^2} + [c - (a + b + 1)z] \frac{dy}{dz} - aby = 0,$$
where $a$, $b$, $c$ are constants. In particular, when $c$, $c - a - b$, $a - b$ are not integers, we find the following pairs of fundamental solutions \cite{Kummer}.

Around $z = 0$:
\begin{align*}
&{}_{2}F_{1}(a, b; c; z), \\
&z^{1 - c} {}_{2}F_{1}(a - c + 1, b - c + 1; 2 - c; z).
\end{align*}

Around $z = 1$:
\begin{align*}
&{}_{2}F_{1}(a, b; a + b - c + 1; 1 - z), \\
&(1 - z)^{c - a - b} {}_{2}F_{1}(c - a, c - b; c - a - b + 1; 1 - z).
\end{align*}

Around $z = \infty$:
\begin{align*}
&z^{-a} {}_{2}F_{1}(a, a - c + 1; a - b + 1; z^{-1}), \\
&z^{-b} {}_{2}F_{1}(b, b - c + 1; b - a + 1; z^{-1}).
\end{align*}

\subsection{Invariants}
\label{identities}

Let $E$ be an elliptic curve over $\mathbb{C}$ given by
$$E : y^2 = 4x^3 - g_2 x - g_3, \quad g_2, g_3 \in \mathbb{C}, \quad g_2^3 - 27g_3^2 \ne 0.$$
The quantity $\Delta = \Delta(E) = g_2^3 - 27g_3^2$ is called the (normalized) discriminant of $E$. The $j$-invariant of $E$ is defined by
$$j = j(E) = 12^3 \frac{g_2^3}{\Delta},$$
and its absolute Klein invariant $J$ is defined by
$$J = J(E) = \frac{j}{12^3}.$$

\begin{lemma}
\label{trivial}
We have
$$\frac{3g_3}{2g_2} \frac{\sqrt{J}}{\sqrt{J - 1}} = \frac{(J\Delta)^{1/6}}{\sqrt{12}}.$$
\end{lemma}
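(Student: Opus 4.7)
The plan is to rewrite both sides of the proposed identity in terms of $g_2$, $g_3$, $\Delta$ using the definitions $J = g_2^3/\Delta$ and $\Delta = g_2^3 - 27g_3^2$, and then check that the two resulting expressions agree.

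First, I would handle the right-hand side. Since $J = g_2^3/\Delta$, we immediately get $J\Delta = g_2^3$, so $(J\Delta)^{1/6} = g_2^{1/2}$ (with the principal branch of the square root, and noting that the lemma is essentially a statement about branches chosen consistently throughout the paper). Hence
$$\frac{(J\Delta)^{1/6}}{\sqrt{12}} = \frac{\sqrt{g_2}}{2\sqrt{3}}.$$

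Next, for the left-hand side I would compute $J - 1$. Writing
$$J - 1 = \frac{g_2^3}{\Delta} - 1 = \frac{g_2^3 - \Delta}{\Delta} = \frac{27 g_3^2}{\Delta},$$
we obtain
$$\frac{\sqrt{J}}{\sqrt{J-1}} = \sqrt{\frac{g_2^3/\Delta}{27 g_3^2/\Delta}} = \frac{g_2^{3/2}}{3\sqrt{3}\, g_3}.$$
Substituting this into the left-hand side gives
$$\frac{3 g_3}{2 g_2} \cdot \frac{g_2^{3/2}}{3\sqrt{3}\, g_3} = \frac{\sqrt{g_2}}{2\sqrt{3}},$$
which matches the right-hand side.

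The calculation itself is entirely mechanical, so the only genuine subtlety is the choice of branches. The identity $J\Delta = g_2^3$ only forces $(J\Delta)^{1/6} = \pm \sqrt{g_2}$, and similarly the square roots on the left-hand side have sign ambiguities. The substantive content of the lemma is that there is a consistent choice (the principal branch, as used throughout the paper) for which equality holds. I would therefore close the argument by a brief remark that, with the principal branch of the square root fixed, all intermediate manipulations preserve the branch, so the computed equality of the two sides holds as stated. This branch bookkeeping is the only place where care is required.
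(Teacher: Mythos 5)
Your proposal is correct and follows essentially the same route as the paper: the paper likewise just substitutes $J = g_2^3/\Delta$ and $\Delta = g_2^3 - 27g_3^2$ (working with $j = 12^3 J$ and reducing the ratio of the two sides to $1/12$), so the computation is identical up to cosmetic normalization. The only caveat is your closing claim that principal-branch manipulations automatically ``preserve the branch'' --- identities like $\sqrt{J}/\sqrt{J-1} = \sqrt{J/(J-1)}$ fail for principal branches in general, which is exactly why the paper proves the lemma only up to a sixth root of unity and pins down the exact branch later by the continuity-plus-single-point-verification argument of Section~\ref{identities}; your proof matches the paper's level of rigor once that qualification is made.
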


\begin{proof}
Up to a 6-th root of unity, we have that
\begin{align*}
\frac{3g_3}{2g_2} (j\Delta)^{-1/6} \frac{\sqrt{J}}{\sqrt{J - 1}} &= \frac{3g_3}{2g_2} (12^3 g_2^3)^{-1/6} \sqrt{\frac{12^3 g_2^3/\Delta}{12^3 g_2^3/\Delta - 12^3}} \\
&= \frac{3g_3}{2g_2} \frac{12g_2}{\sqrt{(12^3 g_2^3/\Delta - 12^3) \Delta}} \\
&= \frac{3g_3}{4\sqrt{3g_2^3 - 3\Delta}}.
\end{align*}
But $\Delta = g_2^3 - 27g_3^2$, so
$$\frac{3g_3}{4\sqrt{3g_2^3 - 3\Delta}} = \frac{3g_3}{4\sqrt{81 g_3^2}},$$
which simplifies to 1/12.
\end{proof}

In what follows, we will often not specify the branch of the $n$-th root function during intermediate calculations. This means that any formulae stated will only be valid up to some $n$-th root of unity (such as in the lemma above), where $n \le 6$.

\begin{remark}
In the main theorems of this paper (Theorem \ref{thm:one}, \ref{thm:zero}, and \ref{thm:infty}), we in fact have exact equalities using the principal branch of the $n$-th root function. This is established by the following technique: Suppose we have the identity $f(\tau) = \epsilon(\tau)g(\tau)$ on some connected subset $U$ of $\mathbb{C}$, where $f(\tau)$ and $g(\tau)$ are continuous non-vanishing functions on $U$, $\epsilon(\tau)$ is an $n$-th root of unity, and $n$ does not depend on $\tau \in U$. Then $f(\tau)/g(\tau)$ is a continuous function on a connected set $U$ to a finite set of $n$-th roots of unity, which has the discrete topology under the induced subspace topology inherited from $\mathbb{C}$. Thus, if there is one $\tau_0 \in U$ such that $f(\tau_0)/g(\tau_0) = 1$, then in fact $\epsilon(\tau) = 1$ on all of $U$.
\end{remark}

\subsection{Eisenstein series}
Let $\Lambda \subset \mathbb{C}$ be a lattice. For an integer $k > 1$, the Eisenstein series $G_{2k}(\Lambda)$ attached to the lattice $\Lambda$ are defined as
$$G_{2k}(\Lambda) = \sum_{\omega \in \Lambda'} \frac{1}{\omega^{2k}}, \quad \Lambda' = \Lambda \setminus \{0\}.$$
Moreover, we define $g_2(\Lambda)$ and $g_3(\Lambda)$ by
$$g_2(\Lambda) = 60G_4(\Lambda) \quad \text{and} \quad g_3(\Lambda) = 140G_6(\Lambda),$$
and the discriminant function $\Delta(\Lambda)$ for the lattice $\Lambda$ by $\Delta(\Lambda) = g_2(\Lambda)^3 - 27g_3(\Lambda)^2$.

If the lattice $\Lambda$ has an ordered $\mathbb{Z}$-basis $(\omega_1, \omega_2)$, where ${\rm Im}(\omega_2/\omega_1) > 0$, we may define $G_{2k}(\Lambda) = G_{2k}(\omega_1, \omega_2)$. For any $\alpha \ne 0$ we have that
$$G_{2k}(\alpha \Lambda) = \alpha^{-2k} G_{2k}(\Lambda).$$
Thus, if $\alpha = 1/\omega_1$ and $\tau = \omega_2/\omega_1$, we obtain
$$G_{2k}(1, \tau) = G_{2k}(\tau) = \omega_1^{2k} G_{2k}(\Lambda),$$
that is,
$$G_{2k}(\tau) = \sum_{(m, n) \ne (0, 0)} \frac{1}{(m + n\tau)^{2k}},$$
where the summation is over all integers $m$, $n$ that do not vanish simultaneously, and $G_{2k}(\tau)$ converges absolutely if $k \geq 2$ and ${\rm Im}(\tau) > 0$.

Let $q = e^{2\pi i \tau}$ with ${\rm Im}(\tau) > 0$, and recall that for an integer $k \geq 2$, the normalized Eisenstein series are defined by
$$E_{2k}(\tau) = \frac{G_{2k}(\tau)}{2\zeta(2k)}.$$
Moreover, it is well-known that if $k \geq 2$ and ${\rm Im}(\tau) > 0$, then
$$E_{2k}(\tau) = 1 + \frac{(2\pi i)^{2k}}{\zeta(2k) (2k - 1)!} \sum_{n = 1}^\infty n^{2k - 1} \frac{q^n}{1 - q^n}.$$
Although $G_2(\tau)$ is conditionally convergent, we can still define $E_2(\tau)$ by the formula above because it is valid for $k = 1$. In particular,
\begin{align*}
E_2(\tau) &= 1 - 24 \sum_{n = 1}^\infty n \frac{q^n}{1 - q^n}, \\
E_4(\tau) &= 1 + 240 \sum_{n = 1}^\infty n^3 \frac{q^n}{1 - q^n}, \\
E_6(\tau) &= 1 - 504 \sum_{n = 1}^\infty n^5 \frac{q^n}{1 - q^n}.
\end{align*}

Furthermore, using Ramanujan's differential equations \cite[p. 142]{Ramanujan} (see also \cite{Ramanujan3})
\begin{equation}
\label{theta-op}
q\frac{d}{dq} E_4(\tau) = \frac{E_2(\tau) E_4(\tau) - E_6(\tau)}{3} \quad \text{and} \quad q\frac{d}{dq} E_6(\tau) = \frac{E_2(\tau) E_6(\tau) - E_4(\tau)^2}{2},
\end{equation}
and the identity,
$$J(\tau) = \frac{E_4(\tau)^3}{E_4(\tau)^3 - E_6(\tau)^2},$$
we obtain
\begin{equation}
\label{ramanujan-diff}
\frac{J'}{J} = -2\pi i \frac{E_6(\tau)}{E_4(\tau)},
\end{equation}
where $J' = \frac{dJ}{d\tau}$.

\begin{remark}
The equations in \eqref{theta-op} can be viewed as determining the effect of Ramanujan's $\theta$-operator on the ring of modular forms of level 1 \cite{Serre}.
\end{remark}

\subsection{Uniformization}

For $z \in \mathbb{C}$, the Weierstrass function $\wp(z, \Lambda)$ is defined by
$$\wp(z, \Lambda) = \frac{1}{z^2} + \sum_{\omega \in \Lambda'} \bigg[\frac{1}{(z - \omega)^2} - \frac{1}{\omega^2}\bigg],$$
where $\Lambda \subset \mathbb{C}$ is a lattice and $\Lambda' = \Lambda \setminus \{0\}$.

Consider the elliptic curve $E_{\Lambda}$ over $\mathbb{C}$ given in Weierstrass form by
$$E_{\Lambda} : y^2 = 4x^3 - g_2(\Lambda) x - g_3(\Lambda).$$
The map $\iota_\Lambda : \mathbb{C} / \Lambda \xrightarrow{\cong} E(\mathbb{C})$, with $\iota_\Lambda : z \mapsto [\wp(z, \Lambda) : \wp'(z, \Lambda) : 1]$, is an isomorphism of Riemann surfaces \cite[Chapter VI, Section 5, Proposition 5.2]{Silverman}. The uniformization theorem \cite[Chapter VI, Section 5, Theorem 5.1]{Silverman} states that every elliptic curve $E$ over $\mathbb{C}$ arises in this way, that is, given any $g_2, g_3 \in \mathbb{C}$ satisfying $g_2^3 - 27g_3^2 \ne 0$, there exists a unique lattice $\Lambda \subset \mathbb{C}$ such that $g_2 = g_2(\Lambda)$ and $g_3 = g_3(\Lambda)$, and hence $E(\mathbb{C}) = E_{\Lambda}(\mathbb{C})$.

Given an ordered $\mathbb{Z}$-basis $(\omega_1, \omega_2)$ for $\Lambda$ with ${\rm Im}(\omega_2/\omega_1) > 0$, which is not unique, the quantities $\omega_1$, $\omega_2$ are called the periods of $E$. Note that a $\mathbb{Z}$-basis should be specified before we can talk about the periods of $E$ as a pair of complex numbers. We will specify the basis used in the proofs of the main theorems in Section \ref{Periods}.

\section{Periods and families of elliptic curves}

\subsection{Families of elliptic curves}

For $u \ne 0$, the map $\varphi_u : (x, y) \mapsto (u^2 x, u^3 y)$ gives an isomorphism $\varphi_u : E \xrightarrow{\cong} E'$ from an elliptic curve $E$ over $\mathbb{C}$ to the elliptic curve $E'$ over $\mathbb{C}$, where
$$E : y^2 = 4x^3 - g_2 x - g_3 \quad \text{and} \quad E' : y^2 = 4x^3 - g_2' x - g_3',$$
and
\begin{align*}
g_2' &= u^4 g_2, \\
g_3' &= u^6 g_3, \\
\Delta(E') &= u^{12} \Delta(E).
\end{align*}

By the uniformization theorem, $E(\mathbb{C}) = E_{\Lambda}(\mathbb{C})$ for some lattice $\Lambda \subset \mathbb{C}$. It then follows that $E'(\mathbb{C}) = E_{u^{-1} \Lambda}(\mathbb{C})$, and we have the following commutative diagram:
\begin{displaymath}
\xymatrix{
\mathbb{C} / \Lambda
\ar[d]_{z \mapsto u^{-1} z}
\ar[r]_{\iota_\Lambda}
& E_\Lambda(\mathbb{C}) = E(\mathbb{C})
\ar[d]^{\varphi_{u}} \\
\mathbb{C} / {u^{-1} \Lambda}
\ar[r]_{\iota_{u^{-1} \Lambda}}
& E_{u^{-1} \Lambda}(\mathbb{C}) = E'(\mathbb{C})
}
\end{displaymath}
so that the isomorphism $\varphi_u$ corresponds to scaling $\Lambda$ by $u^{-1}$.

We now introduce four families of elliptic curves: $E$, $E_\tau$, $\widetilde{E}$, $E_J$, and compare their discriminants, associated lattices, and periods.

Consider the elliptic curve $E$ over $\mathbb{C}$ given by
$$E : y^2 = 4x^3 - g_2 x - g_3, \quad \Delta(E) = \Delta = g_2^3 - 27g_3^2, \quad \Lambda(E) = \mathbb{Z} \omega_1 + \mathbb{Z} \omega_2,$$
with discriminant $\Delta(E)$ and associated lattice $\Lambda(E)$.

Taking $u = \omega_1$, we see that $E$ is isomorphic to
$$E_\tau : y^2 = 4x^3 - g_2(\tau) x - g_3(\tau), \quad \Delta(E_\tau) = \Delta(\tau) = g_2(\tau)^3 - 27g_3(\tau)^2, \quad \Lambda_\tau = \mathbb{Z} + \mathbb{Z} \tau,$$
with discriminant $\Delta(E_\tau)$ and associated lattice $\Lambda(E_\tau)$, where
$$\tau = \frac{\omega_2}{\omega_1}, \quad \omega_1^4 g_2 = g_2(\tau), \quad \omega_1^6 g_3 = g_3(\tau), \quad \omega_1^{12} \Delta = \Delta(\tau).$$

Taking $u = \Delta^{-1/12}$, we see that $E$ is isomorphic to
$$\widetilde{E} : y^2 = 4x^3 - \gamma_2 x - \gamma_3, \quad \Delta(\widetilde{E}) = 1, \quad \Lambda(\widetilde{E}) = \mathbb{Z} \widetilde{\omega}_1 + \mathbb{Z} \widetilde{\omega}_2,$$
with discriminant $\Delta(\widetilde{E})$ and associated lattice $\Lambda(\widetilde{E})$, where
$$\gamma_2 = \Delta^{-1/3} g_2 = J^{1/3}, \quad \gamma_3 = \Delta^{-1/2} g_3 = \sqrt{\frac{J - 1}{27}}, \quad \widetilde{\omega}_k = \omega_k \Delta^{1/12}.$$

Taking $u = \sqrt{g_2/g_3}$, we see that $E$ is isomorphic to
$$E_J : y^2 = 4x^3 + g(x + 1), \quad \Delta(E_J) = \Delta(J) = \frac{3^9 J^2}{16(1 - J)^2}, \quad \Lambda(E_J) = \mathbb{Z} \Omega_1 + \mathbb{Z} \Omega_2,$$
with discriminant $\Delta(E_J)$ and associated lattice $\Lambda(E_J)$, where
$$g = -\frac{g_2^3}{g_3^2} = \frac{27J}{1 - J}, \quad \Omega_k = \omega_k \sqrt{\frac{g_3}{g_2}}.$$
Note also that $\widetilde{E} \cong E_J$ if
$$u = \Delta^{1/12} \sqrt{\frac{g_2}{g_3}} = J^{1/6} \bigg(\frac{27}{J - 1}\bigg)^{1/4},$$
and $E_\tau$ is isomorphic to $E_J$ if $u = \sqrt{g_2(\tau)/g_3(\tau)}$.

In the above treatment, we related the discriminant, lattice, and periods associated with the elliptic curves $E$, $E_\tau$, $\widetilde{E}$, $E_J$ starting with the elliptic curve $E$. We note however that for the proofs of the main theorems, we in fact begin with the elliptic curve $E_J$, and make a choice of periods $(\Omega_1, \Omega_2) = (\Omega_1(J), \Omega_2(J))$ for $E_J$ first in Section \ref{Periods}. This then fixes the periods for the elliptic curves $E$, $E_\tau$, $\widetilde{E}$.

\begin{remark}
The quantities $\Delta$, $\Delta(\tau)$, $\Delta(J)$ are distinct: $\Delta = \Delta(E)$ is the discriminant of the elliptic curve $E$, $\Delta(\tau) = \Delta(E_\tau)$ is the discriminant of the elliptic curve $E_\tau$, which coincides with the well-known cusp form of weight 12 for $\rm{SL}_2(\mathbb{Z})$ with the same name, and finally $\Delta(J) = \Delta(E_J)$ is the discriminant of the elliptic curve $E_J$. Likewise, $g_2$, $g_3$ and $g_2(\tau)$, $g_3(\tau)$ are distinct.
\end{remark}

\subsection{Periods}
\label{Periods}

It is known that the periods $\Omega_1$, $\Omega_2$ of $E_J$ satisfy the Picard-Fuchs differential equation \cite[p. 34]{Modulfunctionen}
\begin{equation}
\label{eq:Fuchs}
\frac{d^2 \Omega}{dJ^2} + \frac{1}{J} \frac{d\Omega}{dJ} + \frac{31J - 4}{144J^2 (1 - J)^2} \Omega = 0.
\end{equation}
For completeness, we describe below more precisely what this means and how these periods are defined as integrals.

The set of points $E(\mathbb{C})$ is a complex Lie group which is topologically a torus. Let $H_1(E(\mathbb{C}), \mathbb{Z})$ denote the first homology group of the topological space $E(\mathbb{C})$. It is known that $H_1(E(\mathbb{C}), \mathbb{Z}) \cong \mathbb{Z}^2$, where simple loops $\alpha$ and $\beta$ in $E(\mathbb{C})$ can be taken as a $\mathbb{Z}$-basis as depicted in \cite[Chapter VI, Section 1, Figure 6.5]{Silverman}.

Let $X(2)(\mathbb{C})$ denote the modular curve of level 2 corresponding to the Legendre family of elliptic curves $y^2 = x(x - 1)(x - \lambda)$ wtih $\lambda \ne 0, 1$. The natural $S_3$-covering $X(2)(\mathbb{C}) \cong \mathbb{P}^1(\mathbb{C}) \to X(1)(\mathbb{C}) \cong \mathbb{P}^1(\mathbb{C})$ is unramified outside of $\{0, 1, \infty\}$.

To define $(\Omega_1, \Omega_2) = (\Omega_1(J), \Omega_2(J))$ as continuous functions of $J$, it suffices, as per Silverman \cite[Chapter VI, Section 5, Proposition 5.2]{Silverman}, to define a $\mathbb{Z}$-basis $\{\gamma_1(J), \gamma_2(J)\}$ for $H_1(E_J, \mathbb{Z})$, where $\gamma_1(J)$ and $\gamma_2(J)$ are continuously functions of $J$ in some open subset of $X(1)(\mathbb{C}) \setminus \{0, 1, \infty\}$. In particular, we see that
$$\Omega_k = \Omega_k(J) = \int_{\gamma_k(J)} \frac{dx}{y}$$
gives a choice of periods for $E_J$ that are continuous as functions of $J$ in some open subset of $X(1)(\mathbb{C}) \setminus \{0, 1, \infty\}$.

Now, let $U$ be a simply-connected open subset of $X(1)(\mathbb{C}) \setminus \{0, 1, \infty\}$ and let $J_0 \in U$. Let $\sigma$ be a path in $U$ such that $\sigma(0) = J_0$ and $\sigma(1) = J$. By the path-lifting lemma, there exists a unique path $\widetilde{\sigma}$ in $X(2)(\mathbb{C})$ lifting $\sigma$, where $\widetilde{\sigma}(0)$ corresponds to a fixed initial choice of labeling of the distinct roots $e_0(J_0)$, $e_1(J_0)$, $e_\lambda(J_0)$ of the cubic $4x^3 + g(J_0)(x + 1)$. Hence, for $J \in U$, we have a well-defined labeling of the distinct roots $e_0(J)$, $e_1(J)$, $e_\lambda(J)$ of the cubic $4x^3 + g(J)(x + 1)$, which vary continuously for $J \in U$.

Let $\alpha_1(J)$ be a simple loop which encircles $e_0(J)$ and $e_1(J)$, and does not pass through the other root. Let $\alpha_2(J)$ be a simple loop which encircles $e_1(J)$ and $e_\lambda(J)$, and does not pass through the other root. Then $\{\alpha_1(J), \alpha_2(J)\}$ forms a $\mathbb{Z}$-basis for $H_1(E_J, \mathbb{Z})$ as depicted in \cite[Chapter VI, Section 1, Figure 6.5]{Silverman}, where $\alpha_1(J)$ and $\alpha_2(J)$ vary continuously for $J \in U$.

For later purposes, we can also modify $\alpha_1(J)$ and $\alpha_2(J)$ in the following way so that we can apply Theorem ~\ref{thm:Jinfty}, \ref{thm:Jone}, \ref{thm:Jzero}. Let $z_0 = \alpha_2(J_0)/\alpha_1(J_0)$ and $q_0 \in \rm{SL}_2(\mathbb{Z})$ be such that $q_0(z_0)$ lies in the standard fundamental domain
$$\mathcal{F} = \{\tau \in \mathfrak{H} \, : \, |{\rm Re}(\tau)| < 1/2, \, |\tau| > 1\}$$
for $\rm{SL}_2(\mathbb{Z})$, and define
$$\begin{pmatrix}
\gamma_2(J) \\ \gamma_1(J)
\end{pmatrix}
=
q_0
\begin{pmatrix}
\alpha_2(J) \\ \alpha_1(J)
\end{pmatrix}$$
for all $J \in U$. This modification ensures our choice of $(\Omega_1, \Omega_2) = (\Omega_1(J), \Omega_2(J))$ has the property that $\Omega_2/\Omega_1$ lies in the closure of $\mathcal{F}$.

For the proofs of Theorem \ref{thm:one}, \ref{thm:zero}, and \ref{thm:infty}, we take $z_0$ to be near $\infty$, $i$, and $\rho$, respectively, and in $\mathcal{F}$, and $U = \mathcal{F}$, to specify the periods $\Omega_1$ and $\Omega_2$ of $E_J$ as described above.

\subsection{Period expressions}
\label{connected-defn}

We begin by proving the following lemma about connected components.

\begin{lemma}
Let $\mathcal{F}$ be the standard fundamental domain for ${\rm SL}_2(\mathbb{Z})$ given by $\mathcal{F} = \{\tau \in \mathfrak{H} \, : \, |{\rm Re}(\tau)| < 1/2, \, |\tau| > 1\}$. Let $C_{\nu(J)} = \{\tau \in \mathcal{F} \, : \, |\nu(J)| < 1\}$, where $\nu(J)$ is one of $1/J$, $(J - 1)/J$, $J/(J - 1)$. Then $C_{\nu(J)}$ is an open set in $\mathfrak{H}$ which is a union of at most two simply-connected components.
\end{lemma}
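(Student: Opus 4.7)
The plan is to reduce the statement to a topological question about subsets of the $J$-plane, using the fact that $J$ restricts to a biholomorphism from $\mathcal{F}$ onto a slit plane. Openness is essentially free: $\mathcal{F}$ is open in $\mathfrak{H}$, and since the elliptic points $\rho$ (where $J = 0$) and $i$ (where $J = 1$) lie on $\partial \mathcal{F}$ rather than in $\mathcal{F}$, each of the rational functions $\nu(J) = 1/J,\, (J-1)/J,\, J/(J-1)$ is continuous on $\mathcal{F}$, so $C_{\nu(J)}$ is the preimage of the open unit disk under a continuous function.

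The main step is to show that $J|_{\mathcal{F}}$ is a biholomorphism onto the slit plane $\mathbb{C} \setminus (-\infty, 1]$. Injectivity on $\mathcal{F}$ follows from $\mathcal{F}$ being a fundamental domain for ${\rm SL}_2(\mathbb{Z})$ with the elliptic fixed points lying on the boundary. To identify the image, I would use the symmetry $J(-\overline{\tau}) = \overline{J(\tau)}$ together with the invariance of $J$ under $\tau \mapsto \tau + 1$ and $\tau \mapsto -1/\tau$ to show that $J$ takes only real values on each piece of $\partial \mathcal{F}$: the arc $|\tau| = 1$ maps onto $[0, 1]$, the vertical lines ${\rm Re}(\tau) = \pm 1/2$ map onto $(-\infty, 0]$, and $J(\tau) \to \infty$ as $\tau \to i\infty$. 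Combined with the surjectivity of $J \colon \mathfrak{H} \to \mathbb{C}$, this forces $J(\mathcal{F}) = \mathbb{C} \setminus (-\infty, 1]$.

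With this biholomorphism in hand, the claim reduces to analyzing, for each $\nu$, the intersection of $\{J \in \mathbb{C} : |\nu(J)| < 1\}$ with the slit plane. The three conditions $|1/J| < 1$, $|(J-1)/J| < 1$, $|J/(J-1)| < 1$ correspond respectively to $\{|J| > 1\}$, $\{{\rm Re}(J) > 1/2\}$, and $\{{\rm Re}(J) < 1/2\}$. A direct check then shows that intersecting each with $\mathbb{C} \setminus (-\infty, 1]$ yields, in the first two cases, a single simply-connected region (respectively the exterior of the closed unit disk slit along $(-\infty, -1)$, and the right half-plane with the segment $(1/2, 1]$ removed), while in the third case one obtains $\{{\rm Re}(J) < 1/2\} \setminus \mathbb{R}$, which splits along the real axis into two simply-connected half-strips. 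In every case one obtains at most two simply-connected components, as required. The hard part will be rigorously identifying $J(\mathcal{F})$ as the slit plane; once this is done, the rest is elementary plane topology.
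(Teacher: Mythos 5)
Your proposal is correct and follows essentially the same route as the paper: both arguments transport the question to the $J$-line using the fact that $J$ identifies $\mathcal{F}$ homeomorphically with the complement of $J(\partial \mathcal{F}) = (-\infty, 1]$, and then finish by elementary plane topology. The only difference is packaging --- the paper handles the three choices of $\nu$ uniformly by viewing $\nu$ as a real M\"obius transformation of $\mathbb{P}^1(\mathbb{C})$, so that the removed set is always a sub-arc of $\mathbb{R} \cup \{\infty\}$ inside the unit disk, whereas you identify $J(\mathcal{F})$ explicitly as the slit plane and verify the three conditions $|J| > 1$, ${\rm Re}(J) > 1/2$, ${\rm Re}(J) < 1/2$ case by case (your case analysis is accurate, and the ``hard part'' you flag, namely $J(\partial\mathcal{F}) = (-\infty,1]$, is equally implicit in the paper's proof).
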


\begin{proof}
Let $\mathfrak{H}^* = \mathfrak{H} \cup \mathbb{P}^1(\mathbb{Q)}$. The function $J$ gives a complex analytic isomorphism of Riemann surfaces $J : {\rm SL}_2(\mathbb{Z}) \setminus \mathfrak{H}^* \to \mathbb{P}^1(\mathbb{C})$, a fortiori, a homeomorphism of topological spaces. This still holds for $\nu(J)$ as it is a M\"{o}bius transformation. Let $\partial \mathcal{F}$ denote the boundary of $\mathcal{F}$. The restriction of $\nu(J)$ to $C_{\nu(J)}$ gives a homeomorphism between $C_{\nu(J)}$ and $\{\nu(J) \in \mathbb{P}^1(\mathbb{C}) \, : \, |\nu(J)| < 1\} \setminus \nu(\partial \mathcal{F})$, which is a union of at most two simply-connected components, which implies the same for $C_{\nu(J)}$.
\end{proof}

Let $C_{\nu(J), z}$ denote the connected component of $C_{\nu(J)}$ with $z$ lying in the closure of $C_{\nu(J)}$. The connected components $C_{1/J, \infty}$, $C_{(J - 1)/J, i}$, $C_{J/(J - 1), \rho}$ are depicted in the following plots, where $x = {\rm Re}(\tau)$ and $y = {\rm Im}(\tau)$. Note that the regions are open, unbounded, and extend vertically towards $\infty$. Here, $\rho = e^{2\pi i/3}$ is the cube root of unity.

\begin{figure}[H]
\centering
\includegraphics[scale = 0.7]{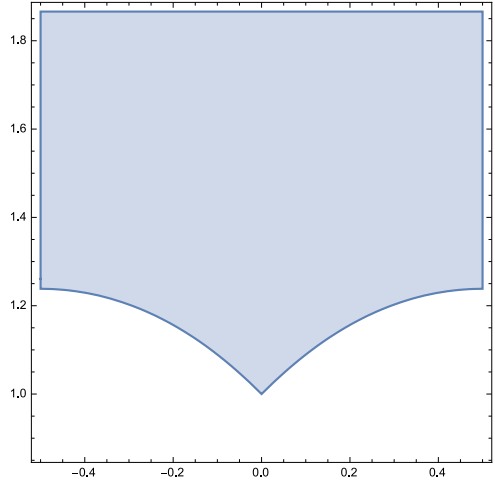}
\caption{$C_{1/J, \infty} = \{\tau \in \mathcal{F} \, : \, |1/J| < 1, \, x^2 + y^2 > 1\}$}
\label{fig:cc1}
\end{figure}

\begin{figure}[H]
\centering
\includegraphics[scale = 0.7]{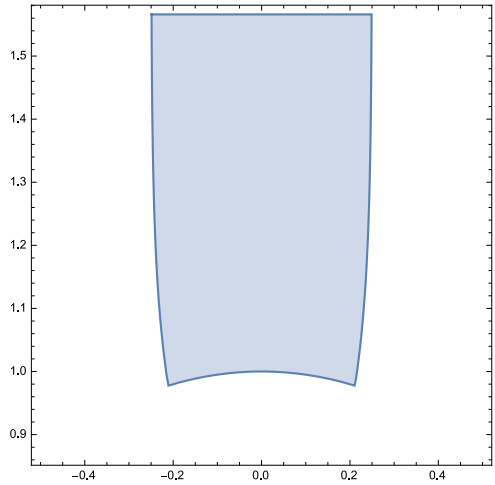}
\caption{$C_{(J - 1)/J, i} = \{\tau \in \mathcal{F} \, : \, |(J - 1)/J| < 1, \, x^2 + y^2 > 1\}$}
\label{fig:cc2}
\end{figure}

\begin{figure}[H]
\centering
\includegraphics[scale = 0.7]{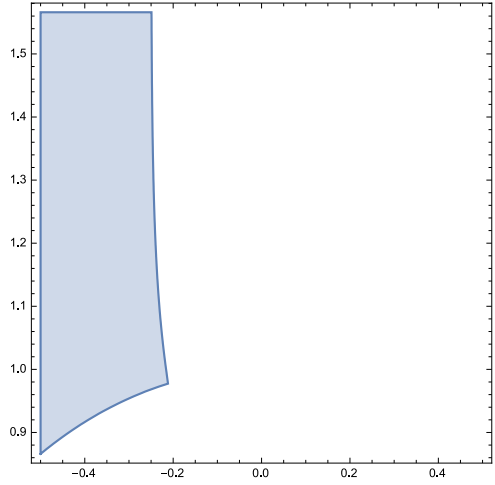}
\caption{$C_{J/(J - 1), \rho} = \{\tau \in \mathcal{F} \, : \, |J/(J - 1)| < 1, \, x^2 + y^2 > 1, \, x < 0\}$}
\label{fig:cc3}
\end{figure}

We now recall some results from \cite{Archinard} which are obtained by Kummer's method of determining the hypergeometric solutions of the Picard-Fuch differential equation \eqref{eq:Fuchs}. Each such solution results in a formula for the period $\Omega_1$ in terms of hypergeometric functions.

\begin{theorem}[$J = \infty$ case]
\label{thm:Jinfty}
Suppose $\tau = \omega_2/\omega_1$ is in the connected component $C_{1/J, \infty}$ of the open set $|J| > 1$. Then
$$\widetilde{\omega}_1 = \omega_1 \Delta^{1/12} = \frac{2\pi}{\sqrt{2\sqrt{3}}} J^{-1/12} {}_{2}F_{1}\bigg(\frac{1}{12}, \frac{5}{12}; 1; \frac{1}{J}\bigg),$$
where $J = J(\tau)$.
\end{theorem}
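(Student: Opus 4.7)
The plan is to apply Kummer's method to the Picard-Fuchs equation~\eqref{eq:Fuchs} at its singular point $J = \infty$ (cf.~\cite{Archinard}), identify $\Omega_1$ with a specific hypergeometric solution, and then fix the multiplicative constant by matching asymptotic behavior as $\tau \to i\infty$. First I would substitute $z = 1/J$ into~\eqref{eq:Fuchs} to move $J = \infty$ to the origin, obtaining $z \hat{\Omega}'' + \hat{\Omega}' + \frac{31 - 4z}{144(z-1)^2} \hat{\Omega} = 0$. An indicial analysis shows the exponents at $z = 0, 1, \infty$ are $(0,0)$, $(1/4, 3/4)$, and $(1/6, -1/6)$, respectively. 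Setting $\hat{\Omega} = (1-z)^{1/4} f(z)$ shifts the Riemann scheme into that of the hypergeometric function with parameters $a = 1/12$, $b = 5/12$, $c = 1$; a direct substitution confirms that $f$ then satisfies $z(1-z) f'' + [1 - \tfrac{3}{2} z] f' - \tfrac{5}{144} f = 0$.

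Since $c = 1$ is resonant, the two independent solutions at $z = 0$ are the analytic $f_1 = {}_2F_1(1/12, 5/12; 1; z)$ and a logarithmic companion. With the choice of basis $\{\gamma_1(J), \gamma_2(J)\}$ from Section~\ref{Periods}, $z_0 = \Omega_2(J_0)/\Omega_1(J_0)$ is placed near $\infty$ in $\mathcal{F}$, so $\Omega_2/\Omega_1 \to i\infty$ as $J \to \infty$; this forces $\Omega_1$ to be (up to a scalar) the non-logarithmic solution, $\Omega_1 = K (1 - 1/J)^{1/4}\, {}_2F_1(1/12, 5/12; 1; 1/J)$ for some constant $K$. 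Translating via the isomorphism $\varphi_u : \widetilde{E} \to E_J$ with $u = J^{1/6}(27/(J-1))^{1/4}$, so that $\widetilde{\omega}_1 = u \Omega_1$, the $(J-1)^{1/4}$ factor in $u$ cancels the one implicit in $(1-1/J)^{1/4}$, leaving $\widetilde{\omega}_1 = 27^{1/4} K\, J^{-1/12}\, {}_2F_1(1/12, 5/12; 1; 1/J)$.

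The constant $K$ is determined by passing to the cusp. From $\widetilde{\omega}_1 = \omega_1 \Delta^{1/12}$ and $\Delta(\tau) = (2\pi)^{12}\eta(\tau)^{24}$, we get $\widetilde{\omega}_1 = 2\pi\, \eta(\tau)^2 \sim 2\pi\, q^{1/12}$ as $q = e^{2\pi i \tau} \to 0$. On the hypergeometric side, the $q$-expansion $J(\tau) \sim 12^{-3} q^{-1}$ yields $J^{-1/12} \sim 12^{1/4} q^{1/12}$, while ${}_2F_1(1/12, 5/12; 1; 1/J) \to 1$. Equating the leading $q^{1/12}$ coefficients gives $27^{1/4} \cdot 12^{1/4}\, K = 2\pi$, whence $K = 2\pi/(3\sqrt{2})$ and $27^{1/4} K = 2\pi/(\sqrt{2}\, 3^{1/4}) = 2\pi/\sqrt{2\sqrt{3}}$, matching the claim.

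The main obstacle is branch bookkeeping: the fractional powers $J^{1/6}$ and $(J - 1)^{1/4}$ are multi-valued, so the identities above are a priori only valid up to a root of unity $\epsilon$ of order dividing $12$. I would handle this exactly as in the Remark of Section~\ref{identities}: both sides of the claimed identity are continuous and non-vanishing on the connected set $C_{1/J, \infty}$, so $\epsilon$ is locally constant, and the asymptotic match above at a point near the cusp then forces $\epsilon = 1$ on all of $C_{1/J, \infty}$, giving the stated equality with principal branches.
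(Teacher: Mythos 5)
Your proposal is correct, but it takes a more self-contained route than the paper: the paper's entire proof of Theorem~\ref{thm:Jinfty} is a citation of Archinard's formula (14) combined with $\Delta(\tau) = \omega_1^{12}\Delta$ and $\Delta(\tau) = (2\pi)^{12}\eta(\tau)^{24}$, whereas you actually derive that formula from the Picard--Fuchs equation \eqref{eq:Fuchs}. I checked the computational steps and they are all right: the substitution $z = 1/J$ does give $z\hat{\Omega}'' + \hat{\Omega}' + \tfrac{31 - 4z}{144(z-1)^2}\hat{\Omega} = 0$ with exponents $(0,0)$, $(1/4,3/4)$, $(\pm 1/6)$; the gauge factor $(1-z)^{1/4}$ produces exactly $z(1-z)f'' + (1 - \tfrac{3}{2}z)f' - \tfrac{5}{144}f = 0$; the exclusion of the logarithmic solution is forced by $\tau = \Omega_2/\Omega_1 \to i\infty$ (a nonzero logarithmic component in $\Omega_1$ would make the ratio tend to a finite limit as $z \to 0$); and the normalization $27^{1/4} \cdot 12^{1/4} K = 2\pi$ via $\widetilde{\omega}_1 = 2\pi\eta(\tau)^2 \sim 2\pi q^{1/12}$ and $J^{-1/12} \sim 12^{1/4}q^{1/12}$ gives $27^{1/4}K = 2\pi/\sqrt{2\sqrt{3}}$ as claimed. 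Your branch bookkeeping is also handled correctly: performing the asymptotic match along the imaginary axis, where $J$ is real and greater than $1$ so all principal roots are positive, pins the root of unity to $1$, and the connectedness argument of Section~\ref{identities} propagates this over $C_{1/J,\infty}$. What your approach buys is independence from the external reference (effectively a proof of Archinard's (14) specialized to this case); what the paper's citation buys is brevity and uniformity across the three singular points, since the analogous statements at $J = 0$ and $J = 1$ follow from the same source without redoing the elliptic-point normalizations $\eta(i)$, $\eta(\rho)$ that your asymptotic-matching method would require there.
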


\begin{proof}
This follows from \cite[p. 255, (14)]{Archinard}, $\Delta(\tau) = \omega_1^{12} \Delta$, and the classical identity $\Delta(\tau) = (2\pi)^{12} \eta(\tau)^{24}$.
\end{proof}

The other three identities in \cite[p. 255, (14)-(15)]{Archinard} can be obtained from the above identity using Euler's and Pfaff's hypergeometric transformations.

\begin{theorem}[$J = 1$ case]
\label{thm:Jone}
Suppose $\tau = \omega_2/\omega_1$ is in the connected component $C_{(J - 1)/J, i}$ of the open set $|(J - 1)/J| < 1$. Then
$$\widetilde{\omega}_1 = \omega_1 \Delta^{1/12} = \frac{2\pi \alpha}{\tau + i} J^{-1/12} {}_{2}F_{1}\bigg(\frac{1}{12}, \frac{5}{12}; \frac{1}{2}; \frac{J - 1}{J}\bigg),$$
where $J = J(\tau)$ and $\alpha = 2i\eta(i)^2$.
\end{theorem}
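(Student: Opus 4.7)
I plan to parallel the proof of Theorem \ref{thm:Jinfty}: identify two linearly independent local solutions of the Picard-Fuchs equation \eqref{eq:Fuchs} around $J = 1$, express $\widetilde{\omega}_1$ and $\widetilde{\omega}_2$ as linear combinations of them, and fix the constants at the elliptic point $\tau = i$ using the closed form $\widetilde{\omega}_1 = 2\pi\eta(\tau)^2$. Concretely, after the substitution $\Omega = J^{-1/12} v$ and the change of variable $z = 1/J$, equation \eqref{eq:Fuchs} becomes the hypergeometric equation with parameters $(a,b,c) = (1/12, 5/12, 1)$; Kummer's method at $z = 1$ (i.e., $J = 1$) produces the two local solutions
$$u_1 = J^{-1/12}\,{}_2F_1\!\Big(\tfrac{1}{12},\tfrac{5}{12};\tfrac{1}{2};\tfrac{J-1}{J}\Big), \qquad u_2 = J^{-1/12}\sqrt{\tfrac{J-1}{J}}\,{}_2F_1\!\Big(\tfrac{7}{12},\tfrac{11}{12};\tfrac{3}{2};\tfrac{J-1}{J}\Big),$$
both holomorphic on $C_{(J-1)/J,i}$.

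Since $\widetilde{\omega}_1$ and $\widetilde{\omega}_2$ are both solutions of \eqref{eq:Fuchs}, there exist constants $A, B, C, D$ with $AD - BC \ne 0$ such that $\widetilde{\omega}_1 = A u_1 + B u_2$ and $\widetilde{\omega}_2 = C u_1 + D u_2$ on $C_{(J-1)/J,i}$. Eliminating $u_2/u_1$ via $\tau = \widetilde{\omega}_2/\widetilde{\omega}_1$ yields
$$\widetilde{\omega}_1 = \frac{AD - BC}{D - B\tau}\, u_1,$$
which already displays the Möbius-in-$\tau$ shape $2\pi\alpha/(\tau + i)$ required. Evaluating at $\tau = i$ (where $J = 1$, $u_1 = 1$, $u_2 = 0$) and using $\widetilde{\omega}_2(i) = i\,\widetilde{\omega}_1(i)$ with $\widetilde{\omega}_1(i) = 2\pi\eta(i)^2$ immediately gives $A = 2\pi\eta(i)^2$ and $C = 2\pi i\,\eta(i)^2$.

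The ratio $D/B$ is fixed by taking one $\tau$-derivative at $\tau = i$: the log-derivative identity $\eta'/\eta = (i\pi/12)E_2$ together with the classical value $E_2(i) = 3/\pi$ gives $\widetilde{\omega}_1'(i) = i\pi\eta(i)^2$ and $\widetilde{\omega}_2'(i) = \pi\eta(i)^2$, while $E_6(i) = 0$ forces the expansion $(J-1)/J \sim c(\tau-i)^2$ near $\tau = i$, so that $u_1 = 1 + O((\tau-i)^2)$ and $u_2 = \sqrt{c}\,(\tau-i) + O((\tau-i)^3)$. Matching first derivatives yields $D/B = -i$, hence $D - B\tau = -B(\tau + i)$, and a short computation collapses $(AD - BC)/(-B)$ to $4\pi i\,\eta(i)^2 = 2\pi\alpha$. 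The final principal-branch equality is obtained by the Remark following Lemma \ref{trivial}, applied on the connected set $C_{(J-1)/J,i}$ with the test point $\tau = i$.

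The main obstacle is this derivative step: since $dJ/d\tau \propto E_6$ vanishes at $\tau = i$, $J$ is not a local uniformizer at this elliptic point and derivatives in $J$ of the periods become singular. This is precisely what forces the extra $(\tau+i)^{-1}$ factor absent from Theorem \ref{thm:Jinfty}, whose singular point $J = \infty$ is a cusp with unipotent monodromy rather than an order-two elliptic fixed point. One must therefore work in the local uniformizer $\tau - i$ (equivalently $\sqrt{(J-1)/J}$) when extracting $B$ and $D$; once this is handled, the remainder is routine bookkeeping.
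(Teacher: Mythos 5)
Your proof is correct in substance, but it takes a genuinely different route from the paper's: the paper disposes of Theorem~\ref{thm:Jone} in one line by citing Archinard's connection formula \cite[p.~253, (10)]{Archinard} together with $\Delta(\tau) = \omega_1^{12}\Delta = (2\pi)^{12}\eta(\tau)^{24}$, whereas you re-derive that connection formula from scratch. Your scheme --- take the Kummer basis $u_1, u_2$ at $J = 1$, write $\widetilde{\omega}_1 = Au_1 + Bu_2$ and $\widetilde{\omega}_2 = Cu_1 + Du_2$, eliminate $u_2/u_1$ via $\widetilde{\omega}_2/\widetilde{\omega}_1 = \tau$ to get $\widetilde{\omega}_1 = \frac{AD - BC}{D - B\tau}\,u_1$, then fix $A, C$ from the values and $D/B$ from the first derivatives of $2\pi\eta(\tau)^2$ and $2\pi\tau\,\eta(\tau)^2$ at $\tau = i$ --- is sound, and the numerology checks out: $A = 2\pi\eta(i)^2$, $C = iA$, $D/B = \pi\eta(i)^2/(i\pi\eta(i)^2) = -i$ using $E_2(i) = 3/\pi$ and $\eta'/\eta = (\pi i/12)E_2$, whence $\widetilde{\omega}_1 = \frac{iA + C}{\tau + i}\,u_1 = \frac{4\pi i\,\eta(i)^2}{\tau + i}\,u_1 = \frac{2\pi\alpha}{\tau + i}\,u_1$ as required. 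Your observation that $E_6(i) = 0$ forces $(J - 1)/J = c(\tau - i)^2 + O((\tau - i)^3)$, so that $u_2 = \sqrt{c}\,(\tau - i) + \cdots$ is a genuine local coordinate while $u_1 = 1 + O((\tau - i)^2)$, is precisely what legitimizes the derivative matching and explains the factor $(\tau + i)^{-1}$ absent at the cusp. What your approach buys is self-containedness and a template that transfers verbatim to $J = 0$ (Theorem~\ref{thm:Jzero}) and to elliptic points of other triangle groups; what the citation buys is brevity. One slip worth correcting: the substitution that produces the hypergeometric equation should be applied to $\widetilde{\omega} = \Omega\, J^{1/6}\bigl(27/(J - 1)\bigr)^{1/4}$ rather than to $\Omega$ itself --- it is $\widetilde{\omega}$, not $\Omega$, that equals $J^{-1/12}$ times a solution of the hypergeometric equation in $z = 1/J$ (compare Theorem~\ref{thm:Jinfty}), so your $u_1, u_2$ form the correct local basis for the ODE satisfied by $\widetilde{\omega}_1, \widetilde{\omega}_2$ but not for \eqref{eq:Fuchs} as written; since only the span and the ratio $u_2/u_1$ enter your argument, this does not affect the conclusion.
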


\begin{proof}
This follows from \cite[p. 253, (10)]{Archinard}, $\Delta(\tau) = \omega_1^{12} \Delta$, and the classical identity $\Delta(\tau) = (2\pi)^{12} \eta(\tau)^{24}$.
\end{proof}

In \cite[p. 253, (10)-(11)]{Archinard}, there are five other identities convergent near $J = 1$. The ones involving $1 - J$ do not converge at the class number one singular values $\tau$. The remaining identity involving ${}_{2}F_{1}(7/12, 11/12; 3/2; (J - 1)/J)$ yields one further example of Chudnovsky-Ramanujan type formulae for level 1 using the method of this paper.

\begin{theorem}[$J = 0$ case]
\label{thm:Jzero}
Suppose $\tau = \omega_2/\omega_1$ is in the connected component $C_{J/(J - 1), \rho}$ of the open set $|J/(J - 1)| < 1$. Then
$$\widetilde{\omega}_1 = \omega_1 \Delta^{1/12} = \frac{2\pi \alpha}{\tau - \overline{\rho}} (1 - J)^{-1/12} {}_{2}F_{1}\bigg(\frac{1}{12}, \frac{7}{12}; \frac{2}{3}; \frac{J}{J - 1}\bigg),$$
where $J = J(\tau)$ and $\alpha = i \eta(\rho)^2 \sqrt{3}$. Here, $\rho = e^{2\pi i/3}$ is the cube root of unity.
\end{theorem}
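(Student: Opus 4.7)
The plan is to follow exactly the pattern of Theorems \ref{thm:Jinfty} and \ref{thm:Jone}: invoke Archinard's explicit hypergeometric period formula for the solution of the Picard-Fuchs equation \eqref{eq:Fuchs} around the singularity $J = 0$ (specifically the Kummer solution expressed via the local variable $z = J/(J-1)$), and combine it with the identities $\omega_1^{12}\Delta = \Delta(\tau)$ and $\Delta(\tau) = (2\pi)^{12}\eta(\tau)^{24}$. The overall multiplicative constant is then pinned down by evaluating at $\tau = \rho$, and the root-of-unity ambiguity is cleaned up via the continuity principle.

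Applying Kummer's method to \eqref{eq:Fuchs} yields 24 hypergeometric representations for its solutions. The one adapted to the M\"{o}bius change of variable $z = J/(J-1)$ is $(1-J)^{-1/12}\,{}_2F_1(1/12,\,7/12;\,2/3;\,J/(J-1))$, with the $(1-J)^{-1/12}$ arising as a Jacobian factor from the change of variable. In Archinard's normalization, this particular Kummer solution is proportional to the period combination $\omega_2 - \overline{\rho}\,\omega_1 = (\tau - \overline{\rho})\,\omega_1$---this combination is singled out because at the CM point $\tau = \rho$ one has $\omega_2 = \rho\,\omega_1$, so $\omega_2 - \overline{\rho}\,\omega_1 = (\rho - \overline{\rho})\omega_1 = i\sqrt{3}\,\omega_1$ remains nonzero. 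Dividing through by $\tau - \overline{\rho}$ and using $\omega_1^{12}\Delta = (2\pi)^{12}\eta(\tau)^{24}$ then produces an equality
$$\widetilde{\omega}_1 = \frac{K}{\tau - \overline{\rho}}\,(1-J)^{-1/12}\,{}_2F_1\!\left(\frac{1}{12},\,\frac{7}{12};\,\frac{2}{3};\,\frac{J}{J-1}\right)$$
valid up to a twelfth root of unity, for a single constant $K$ independent of $\tau$.

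To determine $K$, I would evaluate both sides at $\tau = \rho$. There $J(\rho) = 0$, so the hypergeometric series and $(1-J)^{-1/12}$ both collapse to $1$, while $\rho - \overline{\rho} = i\sqrt{3}$, so the right-hand side reduces to $K/(i\sqrt{3})$. The left-hand side equals $2\pi\,\eta(\rho)^2$ up to the same twelfth root of unity, via the standard identity $\omega_1\Delta^{1/12} = 2\pi\,\eta(\tau)^2$. Solving yields $K = 2\pi\,i\sqrt{3}\,\eta(\rho)^2 = 2\pi\alpha$ with $\alpha = i\,\eta(\rho)^2\sqrt{3}$, as claimed.

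The main obstacle is to lift the equality from ``valid up to a twelfth root of unity'' to an exact identity with the stated principal branches. For this I would apply the continuity principle in the Remark of Section \ref{identities}: both sides are continuous and non-vanishing on the connected domain $C_{J/(J-1),\,\rho}$, so their ratio, taking values in the discrete finite set of twelfth roots of unity, is locally (hence globally) constant, and a single branch check---using the evaluation at $\tau = \rho$ as a limit from the interior---forces the ratio to be $1$. The delicate bookkeeping of compatibly fixing the branches of $(1-J)^{-1/12}$, the implicit branch of $\Delta^{1/12}$, and Archinard's normalization of the hypergeometric solution is the only real technical hurdle.
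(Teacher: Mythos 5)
Your proposal is correct and follows essentially the same route as the paper, whose entire proof is the one-line citation of Archinard's formula (12) together with $\Delta(\tau) = \omega_1^{12}\Delta$ and $\Delta(\tau) = (2\pi)^{12}\eta(\tau)^{24}$. Your additional material---identifying the Kummer solution with exponent $0$ at $J = 0$ with the period combination $(\tau - \overline{\rho})\,\omega_1\Delta^{1/12}$, normalizing by letting $\tau \to \rho$, and fixing the twelfth root of unity by the continuity principle---is a sound reconstruction of what the citation to Archinard encapsulates, and is consistent with how the paper handles the branch issue in its main theorems.
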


\begin{proof}
This follows from \cite[p. 254, (12)]{Archinard}, $\Delta(\tau) = \omega_1^{12} \Delta$, and the classical identity $\Delta(\tau) = (2\pi)^{12} \eta(\tau)^{24}$.
\end{proof}

In \cite[pp. 254-255, (12)-(13)]{Archinard}, there are five other identities convergent near $J = 0$. The ones involving $J$ do not converge at the class number one singular values $\tau$. The remaining identity involving ${}_{2}F_{1}(5/12, 11/12; 4/3; J/(J - 1))$ yields one further example of Chudnovsky-Ramnujan type formulae for level 1 using the method of this paper.

\subsection{Quasi-periods}

For $z \in \mathbb{C}$, the Weierstrass $\zeta$-function is defined by
$$\zeta(z, \Lambda) = \frac{1}{z} + \sum_{\omega \in \Lambda'} \bigg[\frac{1}{z - \omega} + \frac{1}{\omega} + \frac{z}{\omega^2}\bigg],$$
where $\Lambda \subset \mathbb{C}$ is a lattice and $\Lambda' = \Lambda \setminus \{0\}$. That is, $\zeta'(z, \Lambda) = -\wp(z, \Lambda)$. The quasi-period $\eta(\omega, \Lambda)$ is defined by
$$\eta(\omega, \Lambda) = \zeta(z + \omega, \Lambda) - \zeta(z, \Lambda)$$
for $\omega \in \Lambda$. The dependence on $\Lambda$ is often suppressed if there is no confusion. Since
$$\zeta(\lambda z, \lambda \Lambda) = \lambda^{-1} \zeta(z, \Lambda),$$
(see, for instance, \cite[Chapter 18, Section 1]{Lang}) it follows that
$$\eta(\lambda \omega, \lambda \Lambda) = \lambda^{-1} \eta(\omega, \Lambda).$$

Define
$$\eta_k = \eta(\omega_k, \Lambda(E)), \quad \widetilde{\eta}_k = \eta(\widetilde{\omega}_k, \Lambda(\widetilde{E})), \quad {\rm H}_k = \eta(\Omega_k, \Lambda(E_J)).$$
From the homotheties relating the lattices $\Lambda(E)$, $\Lambda(\widetilde{E})$, $\Lambda(E_J)$, we have
\begin{align*}
\eta_k \Delta^{-1/12} &= \widetilde{\eta}_k, \\
\eta_k \sqrt{\frac{g_2}{g_3}} &= {\rm H}_k.
\end{align*}
Under the map of Riemann surfaces
$$\iota_{\Lambda(E_J)} : \mathbb{C} \to \mathbb{C} / \Lambda(E_J) \to E_{\Lambda(E_J)}(\mathbb{C}),$$
given by $\iota_{\Lambda(E_J)}(z) = [\wp(z, \Lambda(E_J)) : \wp'(z, \Lambda(E_J)) : 1]$, the differential $-xdx/y$ pulls back to the differential $d\zeta$ on $\mathbb{C}$. It follows that
\begin{align*}
{\rm H}_k &= \eta(\Omega_k, \Lambda(E_J)) \\
&= \zeta(z + \Omega_k, \Lambda(E_J)) - \zeta(z, \Lambda(E_J)) \\
&= \int_z^{z + \Omega_k} d\zeta(s, \Lambda(E_J)) \\
&= -\int_{\gamma_k(J)} \frac{x}{y} dx.
\end{align*}

Table 3 summarizes the relationships between the periods and quasi-periods of the elliptic curves $E$, $E_\tau$, $\widetilde{E}$, $E_J$.

\begin{table}[H]
\centering
\begin{tabular}{ccc}
\hline
$\text{Elliptic curve}$ & $\text{Periods}$ & $\text{Quasi-periods}$ \\
\hline
$E$ & $(\omega_1, \omega_2)$ & $(\eta_1, \eta_2)$ \\
$E_\tau$ & $(1, \tau)$ & $(\eta_1 \omega_1, \eta_2 \omega_1)$ \\
$\widetilde{E}$ & $(\widetilde{\omega}_1, \widetilde{\omega}_2) = (\omega_1 \Delta^{1/12}, \omega_2 \Delta^{1/12})$ & $(\widetilde{\eta}_1, \widetilde{\eta}_2) = (\eta_1 \Delta^{-1/12}, \eta_2 \Delta^{-1/12})$ \\
$E_J$ & $(\Omega_1, \Omega_2) = (\omega_1 \sqrt{g_3/g_2}, \omega_2 \sqrt{g_3/g_2})$ & $({\rm H}_1, {\rm H}_2) = (\eta_1 \sqrt{g_2/g_3}, \eta_2 \sqrt{g_2/g_3})$ \\
\hline
\end{tabular}
\caption{Summary: Periods and Quasi-periods}
\label{period-table}
\end{table}

\subsection{Quasi-period expressions}

The following result appears in \cite[(4.5)]{Chudnovsky2}, but we provide additional details of its proof.

\begin{theorem}
\label{thm:quasi}
For $k = 1, 2$, we have
$$\widetilde{\eta}_k = -2\sqrt{3} J^{2/3} \sqrt{J - 1} \frac{d\widetilde{\omega}_k}{dJ}.$$
\end{theorem}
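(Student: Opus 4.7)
The plan is to prove the identity by direct calculation on $\widetilde{E}\colon y^2 = 4x^3 - \gamma_2 x - \gamma_3$, whose coefficients depend on $J$ through $\gamma_2(J) = J^{1/3}$ and $\gamma_3(J) = \sqrt{(J-1)/27}$, and which is normalized so that $\Delta(\widetilde{E}) = \gamma_2^3 - 27\gamma_3^2 = 1$. As in the $E_J$ computation recorded just before the statement, the period and quasi-period admit the integral representations
$$\widetilde{\omega}_k = \int_{\gamma_k(J)} \frac{dx}{y}, \qquad \widetilde{\eta}_k = -\int_{\gamma_k(J)} \frac{x\,dx}{y},$$
with the cycles $\gamma_k(J)$ varying continuously over a simply-connected base. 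Differentiating under the integral sign and using implicit differentiation of the curve equation (so that $2y\,dy/dJ = -\gamma_2'(J)x - \gamma_3'(J)$) gives
$$\frac{d\widetilde{\omega}_k}{dJ} = \frac{\gamma_2'(J)}{2}\int_{\gamma_k}\frac{x\,dx}{y^3} + \frac{\gamma_3'(J)}{2}\int_{\gamma_k}\frac{dx}{y^3},$$
so the problem reduces to expressing these two $y^{-3}$-integrals in terms of $\widetilde{\omega}_k$ and $\widetilde{\eta}_k$.

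For this reduction I would compute $d(x^n/y)$ for $n = 0, 1, 2$, use $2y\,dy = (12x^2-\gamma_2)\,dx$, and eliminate powers of $y^2$ via the curve equation. Once integrated around the closed cycle $\gamma_k$, each identity annihilates the exact term on the left and produces one linear relation among the five integrals $\int dx/y$, $\int x\,dx/y$, $\int dx/y^3$, $\int x\,dx/y^3$, $\int x^2\,dx/y^3$. Solving the resulting $3\times 3$ system for the three $y^{-3}$-integrals in terms of $\widetilde{\omega}_k$ and $\widetilde{\eta}_k$ requires dividing by $\gamma_2^3 - 27\gamma_3^2$, which is precisely where the normalization $\Delta(\widetilde{E}) = 1$ enters and keeps the denominator nonzero. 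This yields explicit formulae of the shape
$$\int_{\gamma_k}\frac{dx}{y^3} = -6\gamma_2\,\widetilde{\eta}_k + 9\gamma_3\,\widetilde{\omega}_k, \qquad \int_{\gamma_k}\frac{x\,dx}{y^3} = -\tfrac{\gamma_2^2}{2}\,\widetilde{\omega}_k + 9\gamma_3\,\widetilde{\eta}_k,$$
after using $1 + 27\gamma_3^2 = \gamma_2^3$ to simplify one coefficient.

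Substituting these back, together with $\gamma_2'(J) = \tfrac{1}{3}J^{-2/3}$ and $\gamma_3'(J) = (6\sqrt{3}\sqrt{J-1})^{-1}$, I expect the coefficient of $\widetilde{\omega}_k$ in $d\widetilde{\omega}_k/dJ$ to cancel (a useful self-consistency check that the reduction was done correctly), and the coefficient of $\widetilde{\eta}_k$ to collapse, via the elementary identity $(J-1) - J = -1$ on combining $\sqrt{J-1}/J^{2/3}$ and $J^{1/3}/\sqrt{J-1}$, to exactly $-(2\sqrt{3}\,J^{2/3}\sqrt{J-1})^{-1}$. Rearranging produces the claimed formula $\widetilde{\eta}_k = -2\sqrt{3}\,J^{2/3}\sqrt{J-1}\,d\widetilde{\omega}_k/dJ$. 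The main obstacle is purely bookkeeping: choosing the three exact forms $d(x^n/y)$ correctly, tracking the sign conventions (in particular the minus sign in the definition of $\widetilde{\eta}_k$), and invoking $\Delta(\widetilde{E}) = 1$ at precisely the step where the determinant of the $3\times 3$ system appears. Beyond this standard Picard--Fuchs-style reduction of differentials modulo exact forms, no further analytic input is needed.
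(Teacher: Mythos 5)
Your proposal is correct, and I verified the key intermediate formulae: with $A=\int_{\gamma_k}dx/y^3$ and $B=\int_{\gamma_k}x\,dx/y^3$, the exact forms $d(x^n/y)$ for $n=0,1,2$ together with the relation $x^3=(y^2+\gamma_2x+\gamma_3)/4$ give the linear system $\tfrac{3\gamma_3}{2}A+\gamma_2B=-\tfrac12\widetilde{\omega}_k$, $\tfrac{\gamma_2^2}{12}A+\tfrac{3\gamma_3}{2}B=-\tfrac12\widetilde{\eta}_k$ with determinant $-\Delta(\widetilde{E})/12=-1/12$, whose solution is exactly your pair of formulae for $A$ and $B$; substituting into $d\widetilde{\omega}_k/dJ=\tfrac{\gamma_2'}{2}B+\tfrac{\gamma_3'}{2}A$ does make the $\widetilde{\omega}_k$-coefficient vanish and collapses the $\widetilde{\eta}_k$-coefficient to $-(2\sqrt{3}J^{2/3}\sqrt{J-1})^{-1}$, as you predict. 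However, your route is genuinely different from the paper's. The paper does not perform any reduction of differentials: it quotes the Fricke--Klein relation $36J(J-1)\frac{d\Omega}{dJ}=3(2+J)\Omega-2(J-1){\rm H}$ for the model $E_J$, solves it for ${\rm H}$, and then transfers the result to $\widetilde{E}$ via the explicit homothety factors $\Delta^{\pm 1/12}\sqrt{g_3/g_2}^{\,\pm 1}=J^{\mp 1/6}(27/(J-1))^{\mp 1/4}$, observing that the $\widetilde{\omega}$-terms cancel after the change of variables. Your argument is essentially a first-principles derivation of that quoted relation (carried out directly on the normalized curve $\widetilde{E}$, where $\Delta=1$ makes the determinant trivial), so it buys self-containedness at the cost of the bookkeeping you describe; the paper's version is shorter but rests on the citation to Fricke--Klein. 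Both are valid; just be aware that the continuity of the cycles $\gamma_k(J)$ and the integral representation $\widetilde{\eta}_k=-\int_{\gamma_k}x\,dx/y$ (via the pullback of $-x\,dx/y$ to $d\zeta$) are exactly the facts the paper sets up in its sections on periods and quasi-periods, so you may invoke them as you do.
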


\begin{proof}
Let $\omega = \omega_k$, $\eta = \eta_k$, $\Omega = \Omega_k$, ${\rm H} = {\rm H}_k$, with $k = 1, 2$. From \cite[p. 34]{Modulfunctionen}, we have the following differential relation:
$$36J(J - 1) \frac{d\Omega}{dJ} = 3(2 + J)\Omega - 2(J - 1){\rm H},$$
which gives
\begin{equation}
\label{eta-main}
{\rm H} = \frac{3}{2} \frac{(J + 2) \Omega - 12J(J - 1) \frac{d\Omega}{dJ}}{J - 1} = \frac{3}{2} \frac{J + 2}{J - 1} \Omega - 18J \frac{d\Omega}{dJ}.
\end{equation}
Recall that
$\Omega = \omega \sqrt{g_3/g_2}$, ${\rm H} = \eta \sqrt{g_2/g_3}$ and $\widetilde{\omega} = \omega \Delta^{1/12}$, $\widetilde{\eta} = \eta \Delta^{-1/12}$. Since
$$\Delta^{-1/12} \sqrt{\frac{g_3}{g_2}} = J^{-1/6} \bigg(\frac{27}{J - 1}\bigg)^{-1/4} \quad \text{and} \quad \Delta^{1/12} \sqrt{\frac{g_2}{g_3}} = J^{1/6} \bigg(\frac{27}{J - 1}\bigg)^{1/4},$$
it is plain that
$$\Omega = \widetilde{\omega} J^{-1/6} \bigg(\frac{27}{J - 1}\bigg)^{-1/4} \quad \text{and} \quad {\rm H} = \widetilde{\eta} J^{1/6} \bigg(\frac{27}{J - 1}\bigg)^{1/4},$$
and
$$\frac{d\Omega}{dJ} = \frac{3^{1/4} (J + 2)}{36J^{7/6} (J - 1)^{3/4}} \widetilde{\omega} + \frac{3^{1/4} (J - 1)^{1/4}}{3J^{1/6}} \frac{d\widetilde{\omega}}{dJ}.$$
Substituting these into \eqref{eta-main} gives
\begin{align*}
\widetilde{\eta} \bigg(\frac{27}{J - 1}\bigg)^{1/4} &= \frac{3}{2} \frac{J + 2}{J - 1} \widetilde{\omega} J^{-1/3} \bigg(\frac{27}{J - 1}\bigg)^{-1/4} \\
&\quad- 18J^{5/6} \bigg[\frac{3^{1/4} (J + 2)}{36J^{7/6} (J - 1)^{3/4}} \widetilde{\omega} + \frac{3^{1/4} (J - 1)^{1/4}}{3J^{1/6}} \frac{d\widetilde{\omega}}{dJ}\bigg] \\
&= -18J^{5/6} \frac{3^{1/4} (J - 1)^{1/4}}{3J^{1/6}} \frac{d\widetilde{\omega}}{dJ}.
\end{align*}
Solving for $\widetilde{\eta}$ we obtain
$$\widetilde{\eta} = -2\sqrt{3} J^{2/3} \sqrt{J - 1} \frac{d\widetilde{\omega}}{dJ}.$$
\end{proof}

\subsection{Complex multiplication period relations}
\label{CM}

Recall that $s_2(\tau)$ is defined by
$$s_2(\tau) = \frac{E_4(\tau)}{E_6(\tau)} \bigg(E_2(\tau) - \frac{3}{\pi {\rm Im}(\tau)}\bigg).$$

It is known that $s_2(\tau)$ is rational at $\tau = \sqrt{-N}$ for $N = 2, 3, 4, 7$ and at $\tau = \frac{-1 + \sqrt{-N}}{2}$ for $N = 7, 11, 19, 27, 43, 67, 163$ \cite[Lemma 4.1]{Chudnovsky2}. Below are tables giving these rational values.

\begin{table}[H]
\centering
\begin{tabular}{cc}
\hline
$N$ & $s_2(\tau)$ \\
\hline
$2$ & $5/14$ \\
$3$ & $5/11$ \\
$4$ & $11/21$ \\
$7$ & $85/133$ \\
\hline
\end{tabular}
\caption{Special values of $s_2(\tau)$ at $\tau = \sqrt{-N}$}
\end{table}

\begin{table}[H]
\centering
\begin{tabular}{cc}
\hline
$N$ & $s_2(\tau)$ \\
\hline
$7$ & $5/21$ \\
$11$ & $32/77$ \\
$19$ & $32/57$ \\
$27$ & $160/253$ \\
$43$ & $640/903$ \\
$67$ & $33440/43617$ \\
$163$ & $77265280/90856689$ \\
\hline
\end{tabular}
\caption{Special values of $s_2(\tau)$ at $\tau = \frac{-1 + \sqrt{-N}}{2}$}
\end{table}

From
$$g_2(\tau) = \frac{4\pi^4 E_4(\tau)}{3} \quad \text{and} \quad g_3(\tau) = \frac{8\pi^6 E_6(\tau)}{27},$$
and the fact that $g_2(\tau) = \omega_1^4 g_2$ and $g_3(\tau) = \omega_1^6 g_3$, we obtain:
\begin{equation}
\label{Ramanujan}
\frac{3g_3}{2g_2} s_2(\tau) = \frac{\pi^2}{3\omega_1^2} \bigg[E_2(\tau) - \frac{3}{\pi {\rm Im}(\tau)}\bigg].
\end{equation}

\begin{theorem}
We have
$$2i \omega_1 \eta_1 {\rm Im}(\tau) - \omega_1^2 \left[2i {\rm Im}(\tau) \frac{3g_3}{2g_2} s_2(\tau)\right] = 2\pi i.$$
\end{theorem}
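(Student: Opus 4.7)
The plan is to recognize the identity as Legendre's period relation $\eta_1\omega_2 - \eta_2\omega_1 = 2\pi i$ in disguise, with $s_2(\tau)$ inserted via \eqref{Ramanujan}. Rather than invoking Legendre abstractly, I will substitute explicit $E_2(\tau)$-formulas for both terms on the left hand side and watch the $E_2(\tau)$-contributions cancel, leaving exactly $2\pi i$.

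First, I would recall the classical Eisenstein-summation formula $\eta(1,\Lambda_\tau) = \pi^2 E_2(\tau)/3$ for the Weierstrass $\zeta$-quasi-period at $\omega = 1$ on the normalized lattice $\Lambda_\tau = \mathbb{Z} + \mathbb{Z}\tau$. Applying the homogeneity $\eta(\lambda\omega,\lambda\Lambda) = \lambda^{-1}\eta(\omega,\Lambda)$ to $\Lambda(E) = \omega_1\Lambda_\tau$ then yields $\eta_1 = \pi^2 E_2(\tau)/(3\omega_1)$, so that
$$2i\omega_1\eta_1 \,\mathrm{Im}(\tau) = \tfrac{2i\pi^2\mathrm{Im}(\tau)}{3}E_2(\tau).$$
Next, multiplying both sides of \eqref{Ramanujan} by $2i\omega_1^2\mathrm{Im}(\tau)$ gives
$$\omega_1^2\left[2i\,\mathrm{Im}(\tau)\tfrac{3g_3}{2g_2}s_2(\tau)\right] = \tfrac{2i\pi^2\mathrm{Im}(\tau)}{3}E_2(\tau) - \tfrac{2i\pi^2\mathrm{Im}(\tau)}{3}\cdot\tfrac{3}{\pi\,\mathrm{Im}(\tau)} = \tfrac{2i\pi^2\mathrm{Im}(\tau)}{3}E_2(\tau) - 2\pi i,$$
where the constant correction $-3/(\pi\,\mathrm{Im}(\tau))$ inside the bracket collapses to $-2\pi i$ independently of $\tau$. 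Subtracting this from the first display cancels the $E_2(\tau)$-terms and leaves precisely $2\pi i$, as asserted.

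The only non-routine ingredient is the classical identity $\eta(1,\Lambda_\tau) = \pi^2 E_2(\tau)/3$, whose proof requires care because $G_2$ is only conditionally convergent and must be Eisenstein-summed; this is, however, entirely standard. Modulo that input, the whole proof is a one-line substitution and the cancellation is by design: $s_2(\tau)$ is constructed precisely so that the $-3/(\pi\,\mathrm{Im}(\tau))$ correction absorbs Legendre's $2\pi i$. One could equivalently derive the statement directly from $\eta_1\omega_2 - \eta_2\omega_1 = 2\pi i$ together with the analogous $E_2$-expression $\eta(\tau,\Lambda_\tau) = \tau\pi^2 E_2(\tau)/3 - 2\pi i$ inside $\Lambda_\tau$, which is essentially the same algebraic manipulation.
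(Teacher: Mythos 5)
Your proposal is correct and is essentially the paper's own argument rearranged: both proofs reduce to the classical identity $\omega_1\eta_1=\tfrac{\pi^2}{3}E_2(\tau)$ (which the paper quotes from Greenhill/Lang as $E_2(\tau)=3\omega_1\eta_1/\pi^2$) together with \eqref{Ramanujan}, after which the $E_2(\tau)$-contributions cancel and the $-3/(\pi\,\mathrm{Im}(\tau))$ term produces the $2\pi i$. The paper merely packages the same cancellation by starting from the tautology $2\omega_1\eta_1\,\mathrm{Im}(\tau)-\bigl[2\omega_1\eta_1-2\pi/\mathrm{Im}(\tau)\bigr]\mathrm{Im}(\tau)=2\pi$ and substituting, so there is no substantive difference.
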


\begin{proof}
From the evident identity
$$2\omega_1 \eta_1 {\rm Im}(\tau) - \bigg[2\omega_1 \eta_1 - \frac{2\pi}{{\rm Im}(\tau)}\bigg] {\rm Im}(\tau) = 2\pi,$$
we obtain
$$2\omega_1 \eta_1 {\rm Im}(\tau) - \frac{2\pi^2 {\rm Im}(\tau)}{3} \bigg[\frac{3\omega_1 \eta_1}{\pi^2} - \frac{3}{\pi {\rm Im}(\tau)}\bigg] = 2\pi.$$
From \cite[p. 298]{Greenhill} (or \cite[Chapter 18, Section 3]{Lang}), we have
$$E_2(\tau) = \frac{3\omega_1 \eta_1}{\pi^2}.$$
Hence
$$2\omega_1 \eta_1 {\rm Im}(\tau) - \frac{2\pi^2 {\rm Im}(\tau)}{3} \bigg[E_2(\tau) - \frac{3}{\pi {\rm Im}(\tau)} \bigg] = 2\pi.$$
Using \eqref{Ramanujan}, and multiplying through by $i$, completes the proof.
\end{proof}

\begin{remark}
If $\tau$ is as in \eqref{Chudnovsky}, then
$$\frac{\omega_1 \eta_1 \sqrt{-d}}{a} - \omega_1^2 \bigg[\frac{\sqrt{-d}}{a} \frac{3g_3}{2g_2} s_2(\tau)\bigg] = 2\pi i.$$
However, it will deem convenient to rewrite this as
\begin{equation}
\label{eq:Chudnovsky}
\frac{\omega_1}{2\pi} \bigg[\eta_1 - \omega_1 \frac{3g_3}{2g_2} s_2(\tau)\bigg] = \frac{a}{\sqrt{d}}.
\end{equation}
\end{remark}

\section{$J = \infty$ case}

In this section, we derive \eqref{Jinftypre} and thereby obtain \eqref{chudnovsky-formula-pi} and formulae like it.

\begin{lemma}
We have
\begin{equation}
\label{eq:Pochhammer}
\left(\frac{1}{6}\right)_n \left(\frac{5}{6}\right)_n \left(\frac{1}{2}\right)_n = 12^{-3n} \frac{(6n)!}{(3n)!}.
\end{equation}
\end{lemma}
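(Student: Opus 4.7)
The plan is to deduce this identity from Gauss's multiplication formula for Pochhammer symbols, namely
$$(1)_{kn} = k^{kn} \prod_{j=0}^{k-1} \left(\frac{j+1}{k}\right)_n,$$
which is immediate from the gamma-function multiplication theorem via $(\alpha)_n = \Gamma(\alpha+n)/\Gamma(\alpha)$. Specializing to $k = 6$ and $k = 3$ and recalling $(1)_m = m!$ gives
$$(6n)! = 6^{6n}\, \left(\tfrac{1}{6}\right)_n \left(\tfrac{1}{3}\right)_n \left(\tfrac{1}{2}\right)_n \left(\tfrac{2}{3}\right)_n \left(\tfrac{5}{6}\right)_n (1)_n$$
and
$$(3n)! = 3^{3n}\, \left(\tfrac{1}{3}\right)_n \left(\tfrac{2}{3}\right)_n (1)_n.$$
Dividing, the three factors $(1/3)_n$, $(2/3)_n$, $(1)_n$ cancel, and the constant reduces as $6^{6n}/3^{3n} = 2^{6n} 3^{3n} = 12^{3n}$, which rearranges to the stated identity.

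Alternatively, one can just verify the identity by induction on $n$. The base case $n=0$ is trivial since both sides equal $1$. Using $(\alpha)_{n+1} = (n+\alpha)(\alpha)_n$, the inductive step reduces to the polynomial identity
$$12^3 \left(n + \tfrac{1}{6}\right)\left(n + \tfrac{1}{2}\right)\left(n + \tfrac{5}{6}\right) = 8(6n+1)(6n+3)(6n+5),$$
which is clear after multiplying the three parenthetical factors on the left by $6$ each (absorbing $6^3 = 216$ into $12^3$ to leave $8$). The right-hand side arises from the ratio
$$\frac{(6n+6)!/(3n+3)!}{(6n)!/(3n)!} = \frac{(6n+1)(6n+2)(6n+3)(6n+4)(6n+5)(6n+6)}{(3n+1)(3n+2)(3n+3)},$$
once one observes that the three even factors $(6n+2),(6n+4),(6n+6)$ equal $2(3n+1),2(3n+2),2(3n+3)$ respectively, contributing exactly a factor of $8$ after cancellation.

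There is no real obstacle here; both routes are elementary bookkeeping. The only point requiring minor care is tracking the power of $12$: in the multiplication-formula approach it comes out as $6^{6n}/3^{3n}$, which one must recognize as $12^{3n}$, and in the inductive approach one must correctly distribute the factor $6^3$ between the left and right sides of the polynomial identity. I would choose the multiplication-formula proof for its brevity and because it makes transparent why the factor $12^{-3n}$ appears.
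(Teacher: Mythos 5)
Your proposal is correct, and both routes you sketch go through. The paper's own proof is more hands-on: it writes $\left(\frac{p}{q}\right)_n = q^{-n}\prod_{k=1}^n(qk+p-q)$, multiplies the three symbols to get $6^{-3n}\prod_{k=1}^n(6k-5)(6k-3)(6k-1)$, recognizes this triple product as the product of all odd integers $1\cdot 3\cdot 5\cdots(6n-1)$, and then uses $(6n)! = \bigl(2\cdot 4\cdots 6n\bigr)\bigl(1\cdot 3\cdots (6n-1)\bigr) = 2^{3n}(3n)!\cdot\bigl(1\cdot 3\cdots(6n-1)\bigr)$ to finish. Your first argument repackages essentially the same even/odd splitting into the Pochhammer multiplication formula $(kn)! = k^{kn}\prod_{j=1}^{k}\left(\frac{j}{k}\right)_n$ applied at $k=6$ and $k=3$; dividing cancels exactly the parameters $\frac{1}{3},\frac{2}{3},1$ and leaves $\frac{1}{6},\frac{1}{2},\frac{5}{6}$, which is a nice structural explanation of why those three particular parameters appear (they are the residues mod $6$ not coming from residues mod $3$). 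What the paper's computation buys is self-containedness — no appeal to the multiplication theorem — while yours buys generality and makes the constant $6^{6n}/3^{3n}=12^{3n}$ fall out mechanically. Your inductive fallback is also a valid, if less illuminating, verification. No gaps in either route.
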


\begin{proof}
Notice that
$$\left(\frac{p}{q}\right)_n = q^{-n} \prod_{k = 1}^n (qk + p - q), \quad p, q \in \mathbb{N}$$
follows directly from the definition of $(a)_n$. Therefore,
\begin{align*}
\left(\frac{1}{6}\right)_n \left(\frac{5}{6}\right)_n \left(\frac{3}{6}\right)_n &= 6^{-3n} \prod_{k = 1}^n (6k - 5)(6k - 3)(6k - 1) \\
&= \frac{3 \cdot 5 \cdot 7 \cdots (6n - 1)}{6^{3n}} \\
&= 6^{-3n} \frac{(6n)!}{2 \cdot 4 \cdot 6 \cdots 6n} \\
&= 12^{-3n} \frac{(6n)!}{(3n)!},
\end{align*}
and the proof is complete.
\end{proof}

The following is a slight generalization of what appears in \cite[(4.7)]{Chudnovsky2}.

\begin{theorem}
\label{thm:infty}
If $\tau$ is as in \eqref{Chudnovsky} and lies in $C_{1/J, \infty}$, then
$$\frac{a}{\pi \sqrt{d}} \frac{\sqrt{J}}{\sqrt{J - 1}} = F^2 \frac{1 - s_2(\tau)}{6} - J \frac{d}{dJ} F^2,$$
where $F = {}_{2}F_{1}(1/12, 5/12; 1; 1/J)$, $J = J(\tau)$, and the principal branch of the square root is used.
\end{theorem}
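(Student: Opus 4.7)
The plan is to combine the complex-multiplication relation \eqref{eq:Chudnovsky} with the hypergeometric expression for the period $\widetilde{\omega}_1$ (Theorem \ref{thm:Jinfty}) and the quasi-period relation (Theorem \ref{thm:quasi}), passing from the $E$-lattice to the $\widetilde{E}$-lattice via $\omega_1 = \widetilde{\omega}_1 \Delta^{-1/12}$ and $\eta_1 = \widetilde{\eta}_1 \Delta^{1/12}$. After substituting these into \eqref{eq:Chudnovsky}, the left-hand side reads
$$\frac{1}{2\pi}\left[\widetilde{\omega}_1 \widetilde{\eta}_1 - \widetilde{\omega}_1^2\, \Delta^{-1/6} \frac{3g_3}{2g_2} s_2(\tau)\right] = \frac{a}{\sqrt{d}}.$$
Multiplying through by $\frac{1}{\pi}\frac{\sqrt{J}}{\sqrt{J-1}}$ and invoking Lemma \ref{trivial} collapses the factor $\Delta^{-1/6}\frac{3g_3}{2g_2}\frac{\sqrt{J}}{\sqrt{J-1}}$ to $J^{1/6}/\sqrt{12}$, so only the ``elliptic'' data $\widetilde{\omega}_1$ and $\widetilde{\eta}_1$ remain.

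Next, I would apply Theorem \ref{thm:quasi} in the form $\widetilde{\omega}_1 \widetilde{\eta}_1 \frac{\sqrt{J}}{\sqrt{J-1}} = -\sqrt{3}\, J^{7/6} \frac{d}{dJ}\widetilde{\omega}_1^2$, using $2\widetilde{\omega}_1 \frac{d\widetilde{\omega}_1}{dJ} = \frac{d}{dJ}\widetilde{\omega}_1^2$, so that the target identity can be phrased entirely through $\widetilde{\omega}_1^2$ and its $J$-derivative. Then I would substitute $\widetilde{\omega}_1^2 = \frac{2\pi^2}{\sqrt{3}} J^{-1/6} F^2$ from Theorem \ref{thm:Jinfty}. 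A short calculation expanding $\frac{d}{dJ}(J^{-1/6} F^2)$ and collecting the prefactors yields
$$-\sqrt{3} J^{7/6} \frac{d}{dJ}\widetilde{\omega}_1^2 = \frac{\pi^2}{3}F^2 - 2\pi^2 J \frac{d}{dJ}F^2,$$
while the $s_2$-term becomes $\widetilde{\omega}_1^2 \frac{J^{1/6}}{\sqrt{12}} s_2(\tau) = \frac{\pi^2}{3} F^2 s_2(\tau)$. Dividing by $2\pi^2$ produces the desired right-hand side $F^2 \frac{1 - s_2(\tau)}{6} - J\frac{d}{dJ}F^2$.

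The algebra itself is routine; the main obstacle is the branch bookkeeping. Lemma \ref{trivial} and Theorems \ref{thm:Jinfty}, \ref{thm:quasi} are each stated only up to a $6$-th (or lower) root of unity, so the derivation above establishes the theorem only up to a root-of-unity factor $\epsilon(\tau)$. To upgrade this to an equality with the principal branch, I would invoke the technique described in the remark following Lemma \ref{trivial}: both sides of the claimed identity are continuous and non-vanishing on the connected set $C_{1/J,\infty}$, so $\epsilon(\tau)$ is locally constant and it suffices to check equality at a single convenient point. A concrete class-number-one specialization such as $\tau = (1+i\sqrt{163})/2$ (where $J$, $s_2(\tau)$, and the requisite square roots are all rational or explicitly signed) pins down $\epsilon \equiv 1$ and completes the proof.
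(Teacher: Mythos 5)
Your proposal is correct and follows essentially the same route as the paper's proof: substituting the period expression of Theorem~\ref{thm:Jinfty} and the quasi-period relation of Theorem~\ref{thm:quasi} into \eqref{eq:Chudnovsky}, collapsing the $g_3/g_2$ factor via Lemma~\ref{trivial}, and fixing the root-of-unity ambiguity by a single numerical check on the connected domain $C_{1/J,\infty}$ (the paper uses $\tau = \sqrt{-2}$, an interior point, which is marginally safer than a boundary point like $\frac{-1+\sqrt{-163}}{2}$). The only difference is cosmetic: you package the computation through $\frac{d}{dJ}\widetilde{\omega}_1^2$ rather than expanding $\omega_1$ and $\eta_1$ separately, which slightly streamlines the algebra.
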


\begin{proof}
Let $F = {}_{2}F_{1}(1/12, 5/12; 1; 1/J)$. By Theorem~\ref{thm:Jinfty},
$$\widetilde{\omega}_1 = \frac{2\pi}{12^{1/4}} J^{-1/12} F \quad \text{and} \quad \omega_1 = \frac{2\pi}{12^{1/4}} (J\Delta)^{-1/12} F,$$
so
\begin{align*}
\frac{d \widetilde{\omega}_1}{dJ} &= \frac{2\pi}{12^{1/4}} \bigg(J^{-1/12} \frac{dF}{dJ} + F\frac{d}{dJ} J^{-1/12}\bigg) \\
&= \frac{2\pi}{12^{1/4}} \bigg(J^{-1/12} \frac{dF}{dJ} - \frac{F}{12J^{13/12}}\bigg) \\
&= -\frac{2\pi}{12^{1/4}} J^{-1/12} \bigg(\frac{F}{12J} - \frac{dF}{dJ}\bigg).
\end{align*}
Further, from Theorem~\ref{thm:quasi},
\begin{align*}
\eta_1 &= -2\sqrt{3} J^{2/3} \sqrt{J - 1} \frac{d \widetilde{\omega}_1}{dJ} \Delta^{1/12} \\
&= \frac{4\pi \sqrt{3}}{12^{1/4}} J^{7/12} \sqrt{J - 1} \bigg(\frac{F}{12J} - \frac{dF}{dJ}\bigg) \Delta^{1/12} \\
&= \frac{4\pi \sqrt{3}}{12^{1/4}} J^{-5/12} \sqrt{J - 1} \bigg(\frac{F}{12} - J\frac{dF}{dJ}\bigg) \Delta^{1/12}.
\end{align*}
Substituting the above expressions for $\omega_1$ and $\eta_1$ into \eqref{eq:Chudnovsky} gives
\begin{align*}
&\frac{(J \Delta)^{-1/12}}{12^{1/4}} F\bigg[\frac{2\sqrt{3}}{12^{1/4}} J^{-5/12} \sqrt{J - 1} \bigg(\frac{F}{12} - J\frac{dF}{dJ}\bigg) \Delta^{1/12} \\
&\quad- \frac{(J \Delta)^{-1/12}}{12^4} F\frac{3 g_3}{2 g_2} s_2(\tau)\bigg] = \frac{a}{2\pi \sqrt{d}},
\end{align*}
or, what is the same thing,
\begin{align*}
&\frac{\sqrt{J - 1}}{\sqrt{J}} F\bigg[\frac{F}{12} - J\frac{dF}{dJ} \\
&\quad- \frac{(J \Delta)^{-1/6}}{\sqrt{12}} F\frac{3 g_3}{2 g_2} \frac{\sqrt{J}}{\sqrt{J - 1}} s_2(\tau)\bigg] = \frac{a}{2\pi \sqrt{d}}.
\end{align*}
Using Lemma~\ref{trivial} leads to the desired result.
\end{proof}

\begin{remark}
To justify that the principal branch of the square root makes the formula valid, we numerically verified the formula at $\tau = \sqrt{-2}$, which establishes the identity (see the remark in Section \ref{identities}).
\end{remark}

The following is a slight generalization of what appears in \cite[(1.4)]{Chudnovsky}.

\begin{theorem}
If $\tau$ is as in \eqref{Chudnovsky} and lies in $C_{1/J, \infty}$, then
$$\frac{a}{\pi \sqrt{d}} \frac{\sqrt{j}}{\sqrt{j - 12^3}} = \sum_{n = 0}^\infty \left(\frac{1 - s_2(\tau)}{6} + n\right) \frac{(6n)!}{(3n)! n!^3} j^{-n}.$$
\end{theorem}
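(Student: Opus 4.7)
The plan is to reduce the series identity to Theorem~\ref{thm:infty} by expanding $F^2$ explicitly as a $j$-series. The key tool is Clausen's identity
$$\bigl[{}_{2}F_{1}(a, b; a + b + 1/2; z)\bigr]^2 = {}_{3}F_{2}(2a, 2b, a + b; a + b + 1/2, 2a + 2b; z),$$
which applies with $a = 1/12$ and $b = 5/12$ since $a + b + 1/2 = 1$. This gives
$$F^2 = {}_{3}F_{2}\bigl(1/6, 5/6, 1/2;\, 1, 1;\, 1/J\bigr) = \sum_{n = 0}^\infty \frac{(1/6)_n (5/6)_n (1/2)_n}{(n!)^3} J^{-n}.$$

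Next I would invoke the Pochhammer identity \eqref{eq:Pochhammer} to rewrite $(1/6)_n (5/6)_n (1/2)_n = 12^{-3n} (6n)!/(3n)!$, and then absorb the factor $12^{-3n}$ into $J^{-n}$ by setting $j = 12^3 J$. This yields the clean expansion
$$F^2 = \sum_{n = 0}^\infty \frac{(6n)!}{(3n)!\, (n!)^3}\, j^{-n}.$$
The same substitution handles the LHS: $\sqrt{J}/\sqrt{J - 1} = \sqrt{j}/\sqrt{j - 12^3}$.

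For the term $-J\frac{d}{dJ} F^2$ in Theorem~\ref{thm:infty}, I would observe that $J\frac{d}{dJ} = j\frac{d}{dj}$ since $j/J$ is constant, so termwise differentiation of the series in $j$ gives
$$-J\frac{d}{dJ} F^2 = -j\frac{d}{dj} \sum_{n = 0}^\infty \frac{(6n)!}{(3n)!\, (n!)^3}\, j^{-n} = \sum_{n = 0}^\infty n\, \frac{(6n)!}{(3n)!\, (n!)^3}\, j^{-n}.$$
Combining this with $F^2 \cdot (1 - s_2(\tau))/6$ and equating to the LHS of Theorem~\ref{thm:infty} produces the asserted formula.

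There is no serious obstacle here: the only non-routine ingredient is verifying Clausen's hypothesis $a + b + 1/2 = 1$, after which the manipulation is purely formal once one notes that termwise differentiation is justified inside the disk of convergence $|1/J| < 1$ (which holds on $C_{1/J, \infty}$). The hypothesis that $\tau$ is as in \eqref{Chudnovsky} and lies in $C_{1/J, \infty}$ is inherited from Theorem~\ref{thm:infty}, so no additional domain analysis is needed.
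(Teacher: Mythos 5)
Your proposal is correct and follows essentially the same route as the paper: Clausen's identity with $(a,b)=(1/12,5/12)$, the Pochhammer identity \eqref{eq:Pochhammer} to produce the coefficients $(6n)!/((3n)!\,n!^3\,12^{3n})$, and termwise application of $J\frac{d}{dJ}$ before substituting into Theorem~\ref{thm:infty}. The only cosmetic difference is that you work directly in the variable $j$ while the paper differentiates in an auxiliary variable $z$ and then sets $z = J = j/12^3$; the content is identical.
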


\begin{proof}
Using Clausen's formula \cite[p. 116]{Roy}
$${}_{2}F_{1}(a, b; c; z)^2 = {}_{3}F_{2}(2a, 2b, a + b; 2a + 2b, c; z), \quad c = a + b + \frac{1}{2},$$
we arrive at
$${}_{2}F_{1}\bigg(\frac{1}{12}, \frac{5}{12}; 1; \frac{1}{J}\bigg)^2 = {}_{3}F_{2}\bigg(\frac{1}{6}, \frac{5}{6}, \frac{1}{2}; 1, 1; \frac{1}{J}\bigg).$$
On account of \eqref{eq:Pochhammer} we have
$${}_{2}F_{1}\bigg(\frac{1}{12}, \frac{5}{12}; 1; z\bigg)^2 = \sum_{n = 0}^\infty \frac{(6n)!}{(3n)! n!^3} \frac{z^n}{12^{3n}},$$
hence
$$z\frac{d}{dz} {}_{2}F_{1}\bigg(\frac{1}{12}, \frac{5}{12}; 1; z\bigg)^2 = \sum_{n = 0}^\infty \frac{(6n)!}{(3n)! n!^3} \frac{n}{12^{3n}} z^n.$$
Changing $z$ into $1/z$ yields this
$$-z\frac{d}{dz} {}_{2}F_{1}\bigg(\frac{1}{12}, \frac{5}{12}; 1; \frac{1}{z}\bigg)^2 = \sum_{n = 0}^\infty \frac{(6n)!}{(3n)! n!^3} \frac{n}{12^{3n}} z^{-n}.$$
Upon setting $z = J = j/12^3$, and utilizing Theorem~\ref{thm:infty}, we obtain the desired result.
\end{proof}

\subsection{Examples}

The values $\tau = \sqrt{-N}$, where $N = 2, 3, 4, 7$, are such that $J(\tau)$ is rational, are as in \eqref{Chudnovsky}, and lie in $C_{1/J, \infty}$. Hence, the formula above holds whenever $\tau = \sqrt{-N}$, $N = 2, 3, 4, 7$. We state all the possible identities.

$\tau = \sqrt{-2}$:
$$\frac{5\sqrt{5}}{28\pi} = \sum_{n = 0}^\infty \bigg(\frac{3}{28} + n\bigg) \frac{(6n)!}{(3n)! n!^3} 20^{-3n}.$$

$\tau = \sqrt{-3}$:
$$\frac{5\sqrt{15}}{66\pi} = \sum_{n = 0}^\infty \bigg(\frac{1}{11} + n\bigg) \frac{(6n)!}{(3n)! n!^3} 2^{-n} 30^{-3n}.$$

$\tau = \sqrt{-4}$:
$$\frac{11\sqrt{33}}{252\pi} = \sum_{n = 0}^\infty \bigg(\frac{5}{63} + n\bigg) \frac{(6n)!}{(3n)! n!^3} 66^{-3n}.$$

$\tau = \sqrt{-7}$:
$$\frac{85\sqrt{255}}{7182\pi} = \sum_{n = 0}^\infty \bigg(\frac{8}{133} + n\bigg) \frac{(6n)!}{(3n)! n!^3} 225^{-3n}.$$

The values $\tau = \frac{-1 + \sqrt{-N}}{2}$, where $N = 7, 11, 19, 27, 43, 67, 163$, are such that $J(\tau)$ is rational, are as in \eqref{Chudnovsky}, and lie in $C_{1/J, \infty}$. So the theorem above holds for $\tau = \frac{-1 + \sqrt{-N}}{2}$, $N = 7, 11, 19, 27, 43, 67, 163$. We give all the possible formulae.

$\tau = \frac{-1 + \sqrt{-7}}{2}$:
$$\frac{5\sqrt{15}}{63\pi} = \sum_{n = 0}^\infty (-1)^n \bigg(\frac{8}{63} + n\bigg) \frac{(6n)!}{(3n)! n!^3} 15^{-3n}.$$

$\tau = \frac{-1 + \sqrt{-11}}{2}$:
$$\frac{16\sqrt{2}}{77\pi} = \sum_{n = 0}^\infty (-1)^n \bigg(\frac{15}{154} + n\bigg) \frac{(6n)!}{(3n)! n!^3} 32^{-3n}.$$

$\tau = \frac{-1 + \sqrt{-19}}{2}$:
$$\frac{16\sqrt{6}}{171\pi} = \sum_{n = 0}^\infty (-1)^n \bigg(\frac{25}{342} + n\bigg) \frac{(6n)!}{(3n)! n!^3} 96^{-3n}.$$

$\tau = \frac{-1 + \sqrt{-27}}{2}$:
$$\frac{80\sqrt{30}}{2277\pi} = \sum_{n = 0}^\infty (-1)^n \bigg(\frac{31}{506} + n\bigg) \frac{(6n)!}{(3n)! n!^3} 3^{-n} 160^{-3n}.$$

$\tau = \frac{-1 + \sqrt{-43}}{2}$:
$$\frac{320\sqrt{15}}{8127\pi} = \sum_{n = 0}^\infty (-1)^n \bigg(\frac{263}{5418} + n\bigg) \frac{(6n)!}{(3n)! n!^3} 960^{-3n}.$$

$\tau = \frac{-1 + \sqrt{-67}}{2}$:
$$\frac{880\sqrt{330}}{130851\pi} = \sum_{n = 0}^\infty (-1)^n \bigg(\frac{10177}{261702} + n\bigg) \frac{(6n)!}{(3n)! n!^3} 5280^{-3n}.$$

$\tau = \frac{-1 + \sqrt{-163}}{2}$:
$$\frac{213440\sqrt{10005}}{272570067\pi} = \sum_{n = 0}^\infty (-1)^n \bigg(\frac{13591409}{545140134} + n\bigg) \frac{(6n)!}{(3n)! n!^3} 640320^{-3n}.$$

\section{$J = 1$ case}

In this section we prove a theorem analogous to Theorem~\ref{thm:infty} which results from the hypergeometric representation of $\Omega_1$ in Theorem~\ref{thm:Jone}. We begin with the following proposition.

\begin{proposition}
\label{prop:Eisenstein}
We have
$$\frac{E_4(\tau)}{E_6(\tau)} = \frac{2\pi^2}{9\widetilde{\omega}_1^2} J^{1/3} \frac{\sqrt{27}}{\sqrt{J - 1}}, \quad \widetilde{\omega}_1 = \omega_1 \Delta^{1/12}.$$
\end{proposition}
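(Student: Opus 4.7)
The plan is to prove this proposition by a direct computation, stringing together the identities relating $(E_4(\tau),E_6(\tau))$, $(g_2(\tau),g_3(\tau))$, $(g_2,g_3)$, and finally $(J,\Delta,\widetilde\omega_1)$.

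First I would rewrite the ratio $E_4(\tau)/E_6(\tau)$ using the formulae $g_2(\tau)=\tfrac{4\pi^4}{3}E_4(\tau)$ and $g_3(\tau)=\tfrac{8\pi^6}{27}E_6(\tau)$ from Section \ref{CM}. A straightforward cancellation of constants gives
\[
\frac{E_4(\tau)}{E_6(\tau)} \;=\; \frac{2\pi^2}{9}\,\frac{g_2(\tau)}{g_3(\tau)}.
\]
Next I would apply the homogeneity relations $g_2(\tau)=\omega_1^{4}g_2$ and $g_3(\tau)=\omega_1^{6}g_3$ from the setup of Section 3.1 to pull out a factor of $\omega_1^{-2}$, so that the expression is reduced to computing $g_2/g_3$ in terms of $J$ and $\Delta$.

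The ratio $g_2/g_3$ is read off from the $\widetilde E$-normalisation: since $\gamma_2=\Delta^{-1/3}g_2=J^{1/3}$ and $\gamma_3=\Delta^{-1/2}g_3=\sqrt{(J-1)/27}$, dividing gives
\[
\frac{g_2}{g_3}\;=\;\frac{\Delta^{1/3}J^{1/3}}{\Delta^{1/2}\sqrt{(J-1)/27}}\;=\;\frac{J^{1/3}\sqrt{27}}{\Delta^{1/6}\sqrt{J-1}}.
\]
Combining the above with the previous step produces a factor of $\omega_1^{2}\Delta^{1/6}=(\omega_1\Delta^{1/12})^{2}=\widetilde\omega_1^{2}$ in the denominator, which is exactly the quantity appearing in the statement. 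Assembling everything yields the claimed identity.

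The only genuine subtlety is the branch choice for the fractional powers $\Delta^{1/12}$, $\Delta^{1/6}$, and for $J^{1/3}$ and $\sqrt{J-1}$: the computation above only determines the equality up to a root of unity, as warned in Section \ref{identities}. This is handled by the continuity argument of the remark there, invoking continuity of both sides as functions of $\tau$ on the connected region where the periods have been chosen, together with a single numerical check. No deeper input is needed, so the main "obstacle" is purely bookkeeping — verifying that the relation $\omega_1^2 \Delta^{1/6}=\widetilde\omega_1^2$ holds with the branches induced by the period choices of Section \ref{Periods}, which is exactly the $k=1$ case of the second line of Table \ref{period-table}.
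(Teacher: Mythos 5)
Your proposal is correct and follows essentially the same route as the paper: reduce $E_4/E_6$ to $\frac{2\pi^2}{9\omega_1^2}\,\frac{g_2}{g_3}$ via the normalizations of $g_2(\tau)$, $g_3(\tau)$ and the homogeneity relations, then express $g_2/g_3$ as $\sqrt{27}\,J^{1/3}/(\Delta^{1/6}\sqrt{J-1})$ and absorb $\omega_1^2\Delta^{1/6}$ into $\widetilde\omega_1^2$. The only cosmetic difference is that you obtain the ratio $g_2/g_3$ from the $\gamma_2=J^{1/3}$, $\gamma_3=\sqrt{(J-1)/27}$ identities of the $\widetilde{E}$-normalization rather than from Lemma~\ref{trivial}, which is the same fact in different packaging.
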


\begin{proof}
Since
$$g_2(\tau) = \frac{4\pi^4 E_4(\tau)}{3} \quad \text{and} \quad g_3(\tau) = \frac{8\pi^6 E_6(\tau)}{27},$$
we get
$$\frac{E_4(\tau)}{E_6(\tau)} = \frac{2\pi^2}{9} \frac{g_2(\tau)}{g_3(\tau)}.$$
But $g_2(\tau) = \omega_1^4 g_2$ and $g_3(\tau) = \omega_1^6 g_3$, so
$$\frac{E_4(\tau)}{E_6(\tau)} = \frac{2\pi^2}{9\omega_1^2} \frac{g_2}{g_3}.$$
Lastly, it follows from Lemma~\ref{trivial} that
$$\frac{g_2}{g_3} = \frac{J^{1/3}}{\Delta^{1/6}} \frac{\sqrt{27}}{\sqrt{J - 1}},$$
so the proposed identity follows at once since $\widetilde{\omega}_1 = \omega_1 \Delta^{1/12}$.
\end{proof}

\begin{theorem}
\label{thm:intermediate1}
If $\tau$ is as in \eqref{Chudnovsky} and lies in $C_{(J - 1)/J, i}$, then
$$\frac{i}{\pi} \bigg[\frac{a(\tau + i)^2}{2i \alpha^2 \sqrt{3d}} \frac{\sqrt{J}}{\sqrt{J - 1}} - \frac{F^2}{\tau + i} \frac{E_4(\tau)}{E_6(\tau)}\bigg] = F^2 \frac{1 - s_2(\tau)}{6} - J\frac{d}{dJ} F^2,$$
where $F = {}_{2}F_{1}(1/12, 5/12; 1/2; (J - 1)/J)$ and $\alpha = 2i\eta(i)^2$.
\end{theorem}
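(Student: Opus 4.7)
The plan is to parallel the proof of Theorem~\ref{thm:infty}: feed the hypergeometric expression of Theorem~\ref{thm:Jone}, namely $\widetilde{\omega}_1 = \frac{2\pi\alpha}{\tau+i}J^{-1/12}F$ with $F = {}_{2}F_{1}(1/12,5/12;1/2;(J-1)/J)$, into the complex multiplication period relation \eqref{eq:Chudnovsky}, using Theorem~\ref{thm:quasi} to convert $d\widetilde{\omega}_1/dJ$ into $\widetilde{\eta}_1$. It is convenient to work with the product $\omega_1\eta_1 = \widetilde{\omega}_1\widetilde{\eta}_1 = -\sqrt{3}\,J^{2/3}\sqrt{J-1}\,\frac{d\widetilde{\omega}_1^{2}}{dJ}$, since the $\Delta^{\pm 1/12}$ factors cancel and one avoids extracting $F$ as a separate factor.

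The new wrinkle, absent from the $J=\infty$ case, is that the prefactor $(\tau+i)^{-1}$ depends on $\tau$ and hence on $J$. Writing $\widetilde{\omega}_1^{2} = \frac{4\pi^{2}\alpha^{2}}{(\tau+i)^{2}}J^{-1/6}F^{2}$ and differentiating in $J$ produces three summands. The summand coming from $(\tau+i)^{-2}$ is evaluated via $\frac{d\tau}{dJ} = \frac{i}{2\pi J}\frac{E_{4}(\tau)}{E_{6}(\tau)}$, which is a direct consequence of \eqref{ramanujan-diff}; this summand is precisely the source of the anomalous $\frac{F^{2}}{\tau+i}\frac{E_{4}}{E_{6}}$ term in the target identity. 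The two summands coming from $J^{-1/6}$ and $F^{2}$ are handled exactly as in the proof of Theorem~\ref{thm:infty} and will supply the $F^{2}/6$ and $J\,\frac{d}{dJ}F^{2}$ contributions, respectively.

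Next I would compute $\frac{\omega_1^{2}}{2\pi}\frac{3g_{3}}{2g_{2}}$ by writing $\omega_1^{2} = \widetilde{\omega}_1^{2}\Delta^{-1/6}$ and invoking Lemma~\ref{trivial} to recast $\frac{3g_{3}}{2g_{2}}$ as $\frac{(J\Delta)^{1/6}}{\sqrt{12}}\sqrt{(J-1)/J}$, so that the $\Delta$ powers cancel and one is left with $\frac{\pi\alpha^{2}F^{2}}{(\tau+i)^{2}\sqrt{3}}\sqrt{(J-1)/J}$. Substituting both expressions into \eqref{eq:Chudnovsky} and merging the $s_{2}(\tau)$ correction with the $F^{2}/(6J)$ contribution produces the combination $F^{2}(1-s_{2}(\tau))/(6J)$. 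Dividing through by the common factor $\frac{2\pi\alpha^{2}\sqrt{3}\sqrt{J-1}}{(\tau+i)^{2}\sqrt{J}}$, moving the $E_{4}/E_{6}$ term to the right-hand side, and factoring out $i/\pi$ then yields the claimed identity.

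The main obstacle is not conceptual but rather the careful bookkeeping of the fractional exponents of $J$, $J-1$, and $\Delta$, together with the $(\tau+i)$ and $\alpha^{2}$ factors, so that the three contributions to $d\widetilde{\omega}_1^{2}/dJ$ separate cleanly into the anomalous $\frac{F^{2}}{\tau+i}\frac{E_{4}}{E_{6}}$ term on one side and the Chudnovsky-Ramanujan combination $F^{2}(1-s_{2}(\tau))/6 - J\,\frac{d}{dJ}F^{2}$ on the other. A secondary subtlety, as in Theorem~\ref{thm:infty}, is that intermediate manipulations involve fractional powers determined only up to roots of unity, but this is automatic here since the $\Delta^{\pm 1/12}$ factors cancel at the outset.
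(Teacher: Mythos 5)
Your proposal is correct and follows essentially the same route as the paper: substitute the Theorem~\ref{thm:Jone} expression for $\widetilde{\omega}_1$ into \eqref{eq:Chudnovsky} via Theorem~\ref{thm:quasi}, with the $J$-dependence of the prefactor $(\tau+i)^{-1}$ handled through $d\tau/dJ = 1/J'$ and \eqref{ramanujan-diff} producing the $\frac{F^2}{\tau+i}\frac{E_4(\tau)}{E_6(\tau)}$ term, and Lemma~\ref{trivial} absorbing the $\Delta$ powers in the $s_2(\tau)$ term. The only difference is cosmetic bookkeeping --- you differentiate $\widetilde{\omega}_1^2$ and work with the product $\omega_1\eta_1$ directly, whereas the paper computes $d\widetilde{\omega}_1/dJ$ and $\eta_1$ separately before multiplying --- and your identification of the three summands with the $E_4/E_6$ anomaly, the $F^2(1-s_2(\tau))/6$ combination, and $J\frac{d}{dJ}F^2$ matches the paper's computation exactly.
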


\begin{proof}
Let $F = {}_{2}F_{1}(1/12, 5/12; 1/2; (J - 1)/J)$. Recall from Theorem~\ref{thm:Jone} that
$$\widetilde{\omega}_1 = J^{-1/12} \frac{m}{\tau_0} F,$$
where $m = 2\pi \alpha$ and $\tau_0 = \tau + i$. If $J' = \frac{dJ}{d\tau} \ne 0$, then locally $\tau$ is the inverse of $J$ and $\frac{d\tau}{dJ} = 1/J'$ by the inverse function theorem. Hence,
\begin{empheq}[left=\empheqlbrace]{align}
\label{inverse-assume}
\begin{split}
\frac{d \widetilde{\omega}_1}{dJ} &= J^{-1/12} \frac{m}{\tau_0} \frac{dF}{dJ} + F\frac{d}{dJ} \bigg(\frac{m}{\tau_0} J^{-1/12}\bigg) \\
&= J^{-1/12} \frac{m}{\tau_0} \bigg[\frac{dF}{dJ} - \bigg(\frac{1}{12J} + \frac{1}{\tau_0 J'}\bigg)F\bigg].
\end{split}
\end{empheq}
From Theorem~\ref{thm:quasi},
$$\eta_1 = -2\sqrt{3} J^{7/12} \sqrt{J - 1} \frac{m}{\tau_0} \bigg[\frac{dF}{dJ} - \bigg(\frac{1}{12J} + \frac{1}{\tau_0 J'}\bigg)F\bigg] \Delta^{1/12}.$$
Using \eqref{eq:Chudnovsky} we find that
\begin{align*}
\frac{a}{\sqrt{d}} &= -\frac{mF}{2\pi \tau_0 (J \Delta)^{1/12}} \bigg[2\sqrt{3} J^{7/12} \sqrt{J - 1} \frac{m}{\tau_0} \bigg[\frac{dF}{dJ} - F\bigg(\frac{1}{12J} + \frac{1}{\tau_0 J'}\bigg)\bigg] \Delta^{1/12} \\
&\qquad+ \frac{mF}{\tau_0 (J \Delta)^{1/12}} \frac{3g_3}{2g_2} s_2(\tau)\bigg]
\end{align*}
or
\begin{align*}
\frac{2\pi a \tau_0^2}{m^2 \sqrt{d}} &= -J^{-1/12} F\bigg[2\sqrt{3} J^{7/12} \sqrt{J - 1} \bigg[\frac{dF}{dJ} - F\bigg(\frac{1}{12J} + \frac{1}{\tau_0 J'}\bigg)\bigg] \\
&\qquad+ \frac{F}{J^{1/12} \Delta^{1/6}} \frac{3g_3}{2g_2} s_2(\tau)\bigg].
\end{align*}
Consequently,
\begin{align*}
\frac{2\pi a \tau_0^2}{m^2 \sqrt{d}} &= -J^{-1/12} F\bigg[2\sqrt{3} J^{7/12} \sqrt{J - 1} \bigg[\frac{dF}{dJ} - F\bigg(\frac{1}{12J} + \frac{1}{\tau_0 J'}\bigg)\bigg] \\
&\qquad+ \frac{F}{J^{1/12}} \frac{\sqrt{J - 1}}{2\sqrt{3} J^{1/3}} s_2(\tau)\bigg]
\end{align*}
because it is easy to see from Lemma~\ref{trivial} that
$$\frac{3g_3}{2g_2} \Delta^{-1/6} = \frac{\sqrt{J - 1}}{2\sqrt{3} J^{1/3}}.$$
Equivalently,
$$\frac{2\pi a \tau_0^2}{m^2 \sqrt{3d}} \frac{\sqrt{J}}{\sqrt{J - 1}} = F^2 \bigg[\frac{1}{6} + \frac{2J}{\tau_0 J'} - \frac{s_2(\tau)}{6}\bigg] - 2JF\frac{dF}{dJ}.$$
However,
$$2JF\frac{dF}{dJ} = J\frac{d}{dJ} F^2,$$
so
$$\frac{2\pi a \tau_0^2}{m^2 \sqrt{3d}} \frac{\sqrt{J}}{\sqrt{J - 1}} - F^2 \frac{2J}{\tau_0 J'} = F^2 \frac{1 - s_2(\tau)}{6} - J\frac{d}{dJ} F^2.$$
Hence, using \eqref{ramanujan-diff} we obtain
$$\frac{2\pi a \tau_0^2}{m^2 \sqrt{3d}} \frac{\sqrt{J}}{\sqrt{J - 1}} - \frac{i}{\pi} \frac{F^2}{\tau_0} \frac{E_4(\tau)}{E_6(\tau)} = F^2 \frac{1 - s_2(\tau)}{6} - J\frac{d}{dJ} F^2.$$
Since $m = 2\pi \alpha$, we have
$$\frac{i}{\pi} \bigg[\frac{a \tau_0^2}{2i \alpha^2 \sqrt{3d}} \frac{\sqrt{J}}{\sqrt{J - 1}} - \frac{F^2}{\tau_0} \frac{E_4(\tau)}{E_6(\tau)}\bigg] = F^2 \frac{1 - s_2(\tau)}{6} - J\frac{d}{dJ} F^2.$$
\end{proof}

\begin{remark}
The above derivation uses \eqref{inverse-assume}, which in turn is valid when $J' \ne 0$. This holds for $\tau \in C_{(J - 1)/J, i}$ aside for isolated points. Hence, the above identity holds for all $\tau \in C_{(J - 1)/J, i}$.
\end{remark}

\subsection{Proof of Theorem~\ref{thm:one}}

Using Proposition~\ref{prop:Eisenstein} and Theorem~\ref{thm:intermediate1} we obtain
$$\frac{i}{\pi} \bigg[\frac{a(\tau + i)^2}{2i \alpha^2 \sqrt{3d}} \frac{\sqrt{J}}{\sqrt{J - 1}} - \frac{F^2}{\tau + i} \frac{2\pi^2}{9\widetilde{\omega}_1^2} J^{1/3} \frac{\sqrt{27}}{\sqrt{J - 1}}\bigg] = F^2 \frac{1 - s_2(\tau)}{6} - J\frac{d}{dJ} F^2.$$
We see from Theorem~\ref{thm:Jone} that
$$\widetilde{\omega}_1^2 = \frac{4\pi^2 \alpha^2}{(\tau + i)^2 J^{1/6}} F^2,$$
so
$$\frac{i}{\pi} \bigg[\frac{a(\tau + i)^2}{2i \alpha^2 \sqrt{3d}} \frac{\sqrt{J}}{\sqrt{J - 1}} - \frac{\tau + i}{18\alpha^2} \frac{\sqrt{27J}}{\sqrt{J - 1}}\bigg] = F^2 \frac{1 - s_2(\tau)}{6} - J\frac{d}{dJ} F^2$$
or
$$\frac{i(\tau + i)}{\pi} \frac{\sqrt{J}}{\sqrt{J - 1}} \bigg(\frac{a(\tau + i)}{2i \alpha^2 \sqrt{3d}} - \frac{\sqrt{3}}{6\alpha^2}\bigg) = F^2 \frac{1 - s_2(\tau)}{6} - J\frac{d}{dJ} F^2,$$
which simplifies to
$$\frac{\tau + i}{2\pi \alpha^2 \sqrt{3}} \frac{\sqrt{J}}{\sqrt{1 - J}} \bigg(a\frac{\tau + i}{\sqrt{-d}} - 1\bigg) = F^2 \frac{1 - s_2(\tau)}{6} - J\frac{d}{dJ} F^2.$$

\begin{remark}
To justify that the principal branch of the square root makes the formula valid, we numerically verified the formula at $\tau = \sqrt{-2}$, which establishes the identity (see the remark in Section \ref{identities}).
\end{remark}

\subsection{Examples}
\label{one:examples}

From \eqref{hyperdiff}, we have
\begin{align*}
&\frac{d}{dJ} {}_{2}F_{1}\bigg(a, b; c; \frac{J - 1}{J}\bigg)^2 = \\
&\frac{2ab}{c J^2} {}_{2}F_{1}\bigg(a, b; c; \frac{J - 1}{J}\bigg) {}_{2}F_{1}\bigg(a + 1, b + 1; c + 1; \frac{J - 1}{J}\bigg).
\end{align*}

The values $\tau = \sqrt{-N}$, where $N = 2, 3, 4, 7$, are such that $J(\tau)$ is rational, are as in \eqref{Chudnovsky}, and lie in $C_{(J - 1)/J, i}$. Thus, the above theorem holds for $\tau = \sqrt{-N}$, $N = 2, 3, 4, 7$. Using the fact that
$$\eta(i) = \frac{\Gamma(1/4)}{2\pi^{3/4}},$$
that is, $\alpha = i \Gamma(1/4)^2/2\pi^{3/2}$, we give all the possible identities.

$\tau = \sqrt{-2}$:
\begin{align*}
&\frac{5\pi^2 (1 + \sqrt{2})(-2 + \sqrt{2}) \sqrt{2} \sqrt{3} \sqrt{5}}{84\Gamma(1/4)^4} = \\
&\frac{3}{28} {}_{2}F_{1}\bigg(\frac{1}{12}, \frac{5}{12}; \frac{1}{2}; \frac{98}{125}\bigg)^2 \\
&\quad- \frac{3}{100} {}_{2}F_{1}\bigg(\frac{1}{12}, \frac{5}{12}; \frac{1}{2}; \frac{98}{125}\bigg) {}_{2}F_{1}\bigg(\frac{13}{12}, \frac{17}{12}; \frac{3}{2}; \frac{98}{125}\bigg).
\end{align*}

$\tau = \sqrt{-3}$:
\begin{align*}
&\frac{5\pi^2 (1 + \sqrt{3})(-3 + \sqrt{3}) \sqrt{3} \sqrt{5}}{99\Gamma(1/4)^4} = \\
&\frac{1}{11} {}_{2}F_{1}\bigg(\frac{1}{12}, \frac{5}{12}; \frac{1}{2}; \frac{121}{125}\bigg)^2 \\
&\quad- \frac{1}{225} {}_{2}F_{1}\bigg(\frac{1}{12}, \frac{5}{12}; \frac{1}{2}; \frac{121}{125}\bigg) {}_{2}F_{1}\bigg(\frac{13}{12}, \frac{17}{12}; \frac{3}{2}; \frac{121}{125}\bigg).
\end{align*}

$\tau = \sqrt{-4}$:
\begin{align*}
&-\frac{11\pi^2 \sqrt{11}}{42\Gamma(1/4)^4} = \\
&\frac{5}{63} {}_{2}F_{1}\bigg(\frac{1}{12}, \frac{5}{12}; \frac{1}{2}; \frac{3^3 7^2}{11^3}\bigg)^2 \\
&\quad- \frac{10}{3^2 11^3} {}_{2}F_{1}\bigg(\frac{1}{12}, \frac{5}{12}; \frac{1}{2}; \frac{3^3 7^2}{11^3}\bigg) {}_{2}F_{1}\bigg(\frac{13}{12}, \frac{17}{12}; \frac{3}{2}; \frac{3^3 7^2}{11^3}\bigg).
\end{align*}

$\tau = \sqrt{-7}$:
\begin{align*}
&\frac{85\pi^2 (1 + \sqrt{7})(-7 + \sqrt{7}) \sqrt{7} \sqrt{85}}{3^3 7^2 19 \Gamma(1/4)^4} = \\
&\frac{8}{133} {}_{2}F_{1}\bigg(\frac{1}{12}, \frac{5}{12}; \frac{1}{2}; \frac{3^5 \cdot 7 \cdot 19^2}{5^3 17^3}\bigg)^2 \\
&\quad- \frac{16}{3^2 5^2 17^3} {}_{2}F_{1}\bigg(\frac{1}{12}, \frac{5}{12}; \frac{1}{2}; \frac{3^5 \cdot 7 \cdot 19^2}{5^3 17^3}\bigg) {}_{2}F_{1}\bigg(\frac{13}{12}, \frac{17}{12}; \frac{3}{2}; \frac{3^5 \cdot 7 \cdot 19^2}{5^3 17^3}\bigg).
\end{align*}

\section{$J = 0$ case}

In this section we prove a theorem analogous to Theorem~\ref{thm:infty} which results from the hypergeometric representation of $\Omega_1$ in Theorem~\ref{thm:Jzero}. Before we begin, we prove the following proposition which we use later.

\begin{proposition}
\label{Selberg}
Let $f(\tau)$ be the Weber's function and let $\chi(a) = (a \mid p)$ denote the Legendre symbol. If $p$ is an odd prime such that $\mathbb{Q}(\sqrt{-p})$ has class number $h(-p) = 1$, then
$$f(\sqrt{-p})^2 \eta(\sqrt{-p})^2 \sqrt{2\pi p} = \prod_{n = 1}^{p - 1} \Gamma\bigg(\frac{n}{p}\bigg)^{(w/4) \chi(n)},$$
and
$$\eta\left(\frac{-1 + \sqrt{-p}}{2}\right)^2 e^{\pi i/12} \sqrt{2\pi p} = \prod_{n = 1}^{p - 1} \Gamma\bigg(\frac{n}{p}\bigg)^{(w/4) \chi(n)},$$
where $w = 6$ if $p = 3$ and $w = 2$ if $p > 3$.
\end{proposition}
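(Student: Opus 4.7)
The plan is to reduce the two claimed identities to a single one by a short computation with Weber's function, and then derive that identity from the Chowla--Selberg formula. From the definition $f(\tau) = e^{-\pi i/24}\eta((\tau+1)/2)/\eta(\tau)$, one has $f(\sqrt{-p})^2 \eta(\sqrt{-p})^2 = e^{-\pi i/12}\eta((1+\sqrt{-p})/2)^2$. Applying $\eta(\tau+1) = e^{\pi i/12}\eta(\tau)$ gives $\eta((1+\sqrt{-p})/2)^2 = e^{\pi i/6}\eta((-1+\sqrt{-p})/2)^2$, whence
$$f(\sqrt{-p})^2\,\eta(\sqrt{-p})^2 = e^{\pi i/12}\,\eta\!\left(\frac{-1+\sqrt{-p}}{2}\right)^{\!2},$$
so the two identities of the proposition are equivalent and it suffices to prove the second one.

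Set $\tau_0 = (-1+\sqrt{-p})/2$, so ${\rm Im}(\tau_0) = \sqrt{p}/2$. For the imaginary quadratic field $K = \mathbb{Q}(\sqrt{-p})$ of discriminant $-p$ (recall that $h(-p) = 1$ with $p$ an odd prime forces $p \equiv 3 \pmod 4$), class number $1$, and unit group of order $w$, the Chowla--Selberg formula, derivable from the Kronecker limit formula applied to the decomposition $\zeta_K(s) = \zeta(s)L(s,\chi)$ together with Lerch's formula for $L'(0,\chi)$, specializes to
$$4\pi\sqrt{p}\,|\eta(\tau_0)|^4\,{\rm Im}(\tau_0) = \prod_{n=1}^{p-1}\Gamma(n/p)^{(w/2)\chi(n)}.$$
Substituting ${\rm Im}(\tau_0) = \sqrt{p}/2$ reduces this to $2\pi p\,|\eta(\tau_0)|^4 = \prod_{n=1}^{p-1}\Gamma(n/p)^{(w/2)\chi(n)}$, and taking the positive square root yields
$$\sqrt{2\pi p}\,|\eta(\tau_0)|^2 = \prod_{n=1}^{p-1}\Gamma(n/p)^{(w/4)\chi(n)}.$$

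It remains to show $\eta(\tau_0)^2 e^{\pi i/12} = |\eta(\tau_0)|^2$, after which the identity above completes the proof of the second form. Using $\eta(\tau) = e^{\pi i \tau/12}\prod_{n\geq 1}(1 - e^{2\pi i n \tau})$ at $\tau_0$, note that $2\pi i\tau_0 = -\pi i - \pi\sqrt{p}$ gives $e^{2\pi i\tau_0} = -e^{-\pi\sqrt{p}} \in \mathbb{R}$, so every factor $1 - (-1)^n e^{-n\pi\sqrt{p}}$ of the infinite product is real and positive; hence $\eta(\tau_0) = e^{-\pi i/24}\cdot A$ with $A > 0$, and $\eta(\tau_0)^2 e^{\pi i/12} = A^2 = |\eta(\tau_0)|^2$. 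The main obstacle is pinning down the correct normalization of the Chowla--Selberg formula for the class-number-one case; once that is in hand, the Weber reduction in the first paragraph and the phase identification here are routine.
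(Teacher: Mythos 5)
Your proof is correct, but it reaches the result by a genuinely different route than the paper. The paper starts from the Chowla--Selberg formula in the elliptic-integral form actually printed in \cite[p.~89]{Chowla}, namely an expression for $2K/\pi$ in terms of $(kk')^{-1/6}$ and the Gamma product, and then converts it into the first identity using Jacobi's identity $\eta(\tau)^6 = \tfrac{kk'}{4}(2K/\pi)^3$ and Weber's identity $f(\tau) = 2^{1/6}(kk')^{-1/12}$, evaluated at $\tau = \sqrt{-p} = iK'/K$; the second identity is then deduced from the first via $\eta(\tau+1) = e^{\pi i/12}\eta(\tau)$. You instead prove the equivalence of the two identities up front (the same eta-transformation computation, just relocated), and then establish the second identity directly from the eta-function form of Chowla--Selberg, $4\pi\sqrt{p}\,\lvert\eta(\tau_0)\rvert^4\,{\rm Im}(\tau_0) = \prod_n \Gamma(n/p)^{(w/2)\chi(n)}$, together with an explicit determination of the phase of $\eta(\tau_0)^2$ from the $q$-product (all factors $1-(-1)^n e^{-n\pi\sqrt{p}}$ real and positive). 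Your normalization of the eta-form is right --- it checks against the paper's own value $\eta(\rho)^2 = 3^{1/4}\Gamma(1/3)^3/(4\pi^2 e^{\pi i/12})$ at $p=3$ --- and the phase argument is sound, so the only soft spot is the one you flag yourself: the eta-form of Chowla--Selberg is invoked rather than derived, just as the paper invokes the $K$-form by citation. What each approach buys: the paper's route works entirely with manifestly positive real quantities at $\tau=\sqrt{-p}$ and so needs no phase bookkeeping, at the cost of importing Jacobi's and Weber's classical modulus identities; your route dispenses with elliptic integrals and Weber's modulus identity altogether, at the cost of having to justify the normalization of the eta-form and to track the argument of $\eta((-1+\sqrt{-p})/2)^2$ explicitly.
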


\begin{proof}
Let $K = K(k)$ be the complete elliptic integral of the first kind, where $0 < k < 1$, and let $K' = K(k')$, where $k' = \sqrt{1 - k^2}$.
Chowla and Selberg \cite[p. 89]{Chowla} proved that if $K'/K = \sqrt{p}$ and $h(-p) = 1$, then
\begin{equation}
\label{Chowla}
\frac{2K}{\pi} = 2^{1/3} \frac{(kk')^{-1/6}}{\sqrt{2\pi p}} \prod_{n = 1}^{p - 1} \Gamma\bigg(\frac{n}{p}\bigg)^{(w/4) \chi(n)},
\end{equation}
where $w = 6$ if $p = 3$ and $w = 2$ if $p > 3$.
Recall that $f(\tau)$ is defined for $\tau$ in the upper half-plane by \cite[p. 114]{Weber}
$$f(\tau) = q^{-1/48} \prod_{n = 1}^\infty \{1 + q^{(2n - 1)/2}\} = e^{-\pi i/24} \frac{\eta\left(\frac{1 + \tau}{2}\right)}{\eta(\tau)}, \quad q = e^{2\pi i \tau},$$
where $\eta(\tau)$ is the Dedekind eta function:
$$\eta(\tau) = q^{1/24} \prod_{n = 1}^\infty (1 - q^n).$$
Upon combining the following identity of Jacobi \cite[p. 481]{Watson}
$$\eta(\tau)^6 = q^{1/4} \prod_{n = 1}^\infty (1 - q^n)^6 = \frac{kk'}{4} \left(\frac{2K}{\pi}\right)^3, \quad q = e^{2 \pi i \tau},$$
and Weber's identity \cite[p. 179]{Weber}
$$f(\tau) = \frac{2^{1/6}}{(kk')^{1/12}},$$
we find that
\begin{equation}
\label{Weber}
f(\tau)^2 \eta(\tau)^2 = \frac{(kk')^{1/6}}{2^{1/3}} \frac{2K}{\pi}.
\end{equation}
Since we have the relation $K'/K = -(\log q)/2\pi$ \cite[p. 472]{Watson}, that is, $\tau = iK'/K$, taking $\tau = \sqrt{-p}$ shows that $K'/K = \sqrt{p}$. So from \eqref{Chowla} and \eqref{Weber} we get
$$f(\sqrt{-p})^2 \eta(\sqrt{-p})^2 \sqrt{2\pi p} = \prod_{n = 1}^{p - 1} \Gamma\bigg(\frac{n}{p}\bigg)^{(w/4) \chi(n)},$$
where $w = 6$ if $p = 3$ and $w = 2$ if $p > 3$. Moreover, it is classically known that $\eta(\tau + 1) = e^{\pi i/12} \eta(\tau)$, or, what is the same thing, $\eta(\tau) = e^{\pi i/12} \eta(\tau - 1)$, so
$$f(\tau) \eta(\tau) = e^{\pi i/24} \eta\left(\frac{-1 + \tau}{2}\right).$$
Therefore,
$$\eta\left(\frac{-1 + \sqrt{-p}}{2}\right)^2 e^{\pi i/12} \sqrt{2\pi p} = \prod_{n = 1}^{p - 1} \Gamma\bigg(\frac{n}{p}\bigg)^{(w/4) \chi(n)},$$
where $w = 6$ if $p = 3$ and $w = 2$ if $p > 3$.
\end{proof}

\begin{theorem}
\label{thm:intermediate0}
If $\tau$ is as in \eqref{Chudnovsky} and lies in $C_{J/(J-1),\rho}$, then
$$\frac{i}{\pi} \left[\frac{a(\tau - \overline{\rho})^2}{2\alpha^2 \sqrt{3d}} \frac{J^{1/3}}{(1 - J)^{1/3}} + \frac{F^2}{\tau - \overline{\rho}} \frac{E_4(\tau)}{E_6(\tau)}\right] = F^2 \bigg[\frac{J}{6(1 - J)} + \frac{s_2(\tau)}{6}\bigg] + J\frac{d}{dJ} F^2,$$
where $F = {}_{2}F_{1}(1/12, 7/12; 2/3; J/(J - 1))$ and $\alpha = i \eta(\rho)^2 \sqrt{3}$. Here, $\rho = e^{2\pi i/3}$ is the cube root of unity.
\end{theorem}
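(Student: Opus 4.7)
The plan is to follow the template of the proof of Theorem~\ref{thm:intermediate1}, substituting the hypergeometric representation of $\widetilde{\omega}_1$ near $J=0$ from Theorem~\ref{thm:Jzero} in place of the one near $J=1$. Set $F = {}_{2}F_{1}(1/12, 7/12; 2/3; J/(J-1))$, $\tau_1 = \tau - \overline{\rho}$, and $m = 2\pi\alpha$ with $\alpha = i\eta(\rho)^{2}\sqrt{3}$, so that Theorem~\ref{thm:Jzero} reads $\widetilde{\omega}_1 = (1-J)^{-1/12}(m/\tau_1)F$.

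First I differentiate $\widetilde{\omega}_1$ in $J$, using the inverse function theorem at points where $J'\neq 0$, to obtain $d\widetilde{\omega}_1/dJ = (1-J)^{-1/12}(m/\tau_1)\bigl[dF/dJ + (\tfrac{1}{12(1-J)} - \tfrac{1}{\tau_1 J'})F\bigr]$. The key sign difference from the $J=1$ derivation is that the logarithmic derivative of $(1-J)^{-1/12}$ is $+\tfrac{1}{12(1-J)}$, whereas $J^{-1/12}$ contributed $-\tfrac{1}{12J}$; propagated through the algebra, this produces the term $J/(6(1-J))$ in the final right-hand side, replacing the constant $1/6$ seen in Theorem~\ref{thm:intermediate1}.

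Next I apply Theorem~\ref{thm:quasi} to produce $\widetilde{\eta}_1$, form $\eta_1 = \widetilde{\eta}_1\Delta^{1/12}$, and substitute together with $\omega_1 = \widetilde{\omega}_1\Delta^{-1/12}$ into the complex multiplication relation~\eqref{eq:Chudnovsky}. Simplification proceeds via Lemma~\ref{trivial} in the form $\tfrac{3g_3}{2g_2}\Delta^{-1/6} = \sqrt{J-1}/(2\sqrt{3}J^{1/3})$, combined with the identification $\sqrt{J-1}/(1-J)^{1/6} = i\,(1-J)^{1/3}$ valid for the principal branches near $J=0$. After collecting terms, dividing through by $\sqrt{3}$, and consolidating $2F\,dF/dJ$ into $(d/dJ)F^2$, one arrives at the intermediate identity $J\,(d/dJ)F^2 + JF^2/(6(1-J)) - 2JF^2/(\tau_1 J') + F^2 s_2(\tau)/6 = (i/\pi)\,(a\tau_1^2)/(2\alpha^2\sqrt{3d}) \cdot J^{1/3}/(1-J)^{1/3}$.

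Finally I invoke~\eqref{ramanujan-diff}, equivalently $J/J' = (i/2\pi)\cdot E_4/E_6$, to rewrite $-2JF^2/(\tau_1 J')$ as $-(i/\pi)(F^2/\tau_1)\cdot E_4/E_6$; moving this term across delivers the $+F^2/(\tau-\overline{\rho})\cdot E_4/E_6$ inside the brackets of the stated identity. Since $J'$ vanishes only at isolated points of $C_{J/(J-1),\rho}$, the identity extends throughout this region by continuity, exactly as in the remark following Theorem~\ref{thm:intermediate1}. The principal obstacle is disciplined bookkeeping of signs and principal branches for $\sqrt{J-1}$, $(1-J)^{1/6}$, and $J^{1/3}$; in particular, the $+$ sign in front of the $E_4/E_6$ term (as opposed to the $-$ in Theorem~\ref{thm:intermediate1}) reflects both the sign of the logarithmic derivative of $(1-J)^{-1/12}$ and the factor of $i$ arising from $\sqrt{J-1} = i\sqrt{1-J}$ near $J = 0$.
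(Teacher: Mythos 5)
Your proposal is correct and follows essentially the same route as the paper's own proof: differentiate the Theorem~\ref{thm:Jzero} representation of $\widetilde{\omega}_1$ via the inverse function theorem, feed it through Theorem~\ref{thm:quasi} and \eqref{eq:Chudnovsky}, simplify with Lemma~\ref{trivial}, and convert $J/(\tau_0 J')$ to the $E_4/E_6$ term via \eqref{ramanujan-diff}, with the same continuity argument at the isolated zeros of $J'$. Your sign and branch bookkeeping (in particular $\sqrt{J-1}/(1-J)^{1/6} = i(1-J)^{1/3}$ and the resulting $+$ sign on the $E_4/E_6$ term) matches the paper's computation.
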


\begin{proof}
Let $F = {}_{2}F_{1}(1/12, 7/12; 2/3; J/(J - 1))$. Recall from Theorem~\ref{thm:Jzero} that
$$\widetilde{\omega}_1 = \frac{m}{\tau_0} (1 - J)^{-1/12} F,$$
where $m = 2\pi \alpha$ and $\tau_0 = \tau - \overline{\rho}$. If $J' = \frac{dJ}{d\tau} \ne 0$, then locally $\tau$ is the inverse of $J$ and $\frac{d\tau}{dJ} = 1/J'$ by the inverse function theorem. Thus,
\begin{align*}
\frac{d \widetilde{\omega}_1}{dJ} &= (1 - J)^{-1/12} \frac{m}{\tau_0} \frac{dF}{dJ} + F\frac{d}{dJ} \bigg(\frac{m}{\tau_0} (1 - J)^{-1/12}\bigg) \\
&= (1 - J)^{-1/12} \frac{m}{\tau_0} \bigg[\frac{dF}{dJ} + \bigg(\frac{1}{12(1 - J)} - \frac{1}{\tau_0 J'}\bigg)F\bigg].
\end{align*}
Using Theorem~\ref{thm:quasi} we get
$$\eta_1 = -\frac{2\sqrt{3} J^{2/3} \sqrt{J - 1}}{(1 - J)^{1/12}} \frac{m}{\tau_0} \bigg[\frac{dF}{dJ} + \bigg(\frac{1}{12(1 - J)} - \frac{1}{\tau_0 J'}\bigg)F\bigg] \Delta^{1/12}.$$
By virtue of \eqref{eq:Chudnovsky} we obtain
\begin{align*}
\frac{a}{\sqrt{d}} &= -\frac{mF}{2\pi \tau_0 ((1 - J) \Delta)^{1/12}} \bigg[\frac{2\sqrt{3} J^{2/3} \sqrt{J - 1}}{(1 - J)^{1/12}} \frac{m}{\tau_0} \bigg[\frac{dF}{dJ} + \bigg(\frac{1}{12(1 - J)} - \frac{1}{\tau_0 J'}\bigg)F\bigg] \Delta^{1/12} \\
&\qquad+ \frac{mF}{\tau_0 ((1 - J) \Delta)^{1/12}} \frac{3g_3}{2g_2} s_2(\tau)\bigg]
\end{align*}
or
\begin{align*}
\frac{2\pi a \tau_0^2}{m^2 \sqrt{d}} &= -\frac{F}{(1 - J)^{1/6}} \bigg[2\sqrt{3} J^{2/3} \sqrt{J - 1} \bigg[\frac{dF}{dJ} + \bigg(\frac{1}{12(1 - J)} - \frac{1}{\tau_0 J'}\bigg)F\bigg] \\
&\qquad+ \frac{F}{\Delta^{1/6}} \frac{3g_3}{2g_2} s_2(\tau)\bigg].
\end{align*}
Using Lemma~\ref{trivial} we get
$$\frac{3g_3}{2g_2} \Delta^{-1/6} = \frac{\sqrt{J - 1}}{2\sqrt{3} J^{1/3}},$$
which yields
\begin{align*}
\frac{2\pi a \tau_0^2}{m^2 \sqrt{d}} &= -\frac{F}{(1 - J)^{1/6}} \bigg[2\sqrt{3} J^{2/3} \sqrt{J - 1} \bigg[\frac{dF}{dJ} + \bigg(\frac{1}{12(1 - J)} - \frac{1}{\tau_0 J'}\bigg)F\bigg] \\
&\qquad+ \frac{\sqrt{J - 1}}{2\sqrt{3} J^{1/3}} s_2(\tau)F\bigg],
\end{align*}
or
$$\frac{2\pi a \tau_0^2}{m^2 \sqrt{3d}} \frac{J^{1/3}}{\sqrt{J - 1}} (1 - J)^{1/6} = -2JF\bigg[\frac{dF}{dJ} + \bigg(\frac{1}{12(1 - J)} - \frac{1}{\tau_0 J'}\bigg)F\bigg] - \frac{s_2(\tau)}{6} F^2.$$
Therefore,
$$-\frac{2\pi a \tau_0^2}{m^2 \sqrt{3d}} \frac{J^{1/3}}{\sqrt{J - 1}} (1 - J)^{1/6} + \frac{2J}{\tau_0 J'} F^2 = F^2 \bigg[\frac{J}{6(1 - J)} + \frac{s_2(\tau)}{6}\bigg] + J\frac{d}{dJ} F^2$$
as
$$2JF\frac{dF}{dJ} = J\frac{d}{dJ} F^2.$$
Using \eqref{ramanujan-diff} we obtain
$$-\frac{2\pi a \tau_0^2}{m^2 \sqrt{3d}} \frac{J^{1/3}}{\sqrt{J - 1}} (1 - J)^{1/6} + \frac{i}{\pi} \frac{F^2}{\tau_0} \frac{E_4(\tau)}{E_6(\tau)} = F^2 \bigg[\frac{J}{6(1 - J)} + \frac{s_2(\tau)}{6}\bigg] + J\frac{d}{dJ} F^2,$$
that is,
$$\frac{i}{\pi} \left[\frac{a \tau_0^2}{2\alpha^2 \sqrt{3d}} \frac{J^{1/3}}{(1 - J)^{1/3}} + \frac{F^2}{\tau_0} \frac{E_4(\tau)}{E_6(\tau)}\right] = F^2 \bigg[\frac{J}{6(1 - J)} + \frac{s_2(\tau)}{6}\bigg] + J\frac{d}{dJ} F^2.$$
\end{proof}

\subsection{Proof of Theorem~\ref{thm:zero}}

Using Proposition~\ref{prop:Eisenstein} and Theorem~\ref{thm:intermediate0} we obtain
$$\frac{i}{\pi} \bigg[\frac{a(\tau - \overline{\rho})^2}{2\alpha^2 \sqrt{3d}} \frac{J^{1/3}}{(1 - J)^{1/3}} + \frac{F^2}{\tau - \overline{\rho}} \frac{2\pi^2}{9\widetilde{\omega}_1^2} J^{1/3} \frac{\sqrt{27}}{\sqrt{J - 1}}\bigg] = F^2 \bigg[\frac{J}{6(1 - J)} + \frac{s_2(\tau)}{6}\bigg] + J\frac{d}{dJ} F^2.$$
We see from Theorem~\ref{thm:Jzero} that
$$\widetilde{\omega}_1^2 = \frac{4\pi^2 \alpha^2}{(\tau - \overline{\rho})^2 (1 - J)^{1/6}} F^2,$$
so
$$\frac{i}{\pi} \bigg(\frac{a(\tau - \overline{\rho})^2}{2\alpha^2 \sqrt{3d}} - \frac{\tau - \overline{\rho}}{6\alpha^2} \sqrt{-3}\bigg) \frac{J^{1/3}}{(1 - J)^{1/3}} = F^2 \bigg[\frac{J}{6(1 - J)} + \frac{s_2(\tau)}{6}\bigg] + J\frac{d}{dJ} F^2,$$
which simplifies to
$$-\frac{\tau - \overline{\rho}}{2\pi \alpha^2 \sqrt{3}} \frac{J^{1/3}}{(1 - J)^{1/3}} \bigg(a\frac{\tau - \overline{\rho}}{\sqrt{-d}} - 1\bigg) = F^2 \bigg[\frac{J}{6(1 - J)} + \frac{s_2(\tau)}{6}\bigg] + J\frac{d}{dJ} F^2.$$

\begin{remark}
To justify that the principal branch of the square and cube root makes the formula valid, we numerically verified the formula at $\tau = \frac{-1 + \sqrt{-7}}{2}$, which establishes the identity (see the remark in Section \ref{identities}).
\end{remark}

\subsection{Examples}
\label{zero:examples}

From \eqref{hyperdiff}, we get
\begin{align*}
&\frac{d}{dJ} {}_{2}F_{1}\bigg(a, b; c; \frac{J}{J - 1}\bigg)^2 = \\
&-\frac{2ab}{c(J - 1)^2} {}_{2}F_{1}\bigg(a, b; c; \frac{J}{J - 1}\bigg) {}_{2}F_{1}\bigg(a + 1, b + 1; c + 1; \frac{J}{J - 1}\bigg).
\end{align*}
Furthermore, setting $p = 3$ in Proposition \ref{Selberg} yields
$$\eta(\rho)^2 e^{\pi i/12} \sqrt{6\pi} = \frac{\Gamma(1/3)^{3/2}}{\Gamma(2/3)^{3/2}}.$$
It follows from Euler's reflection formula
$$\Gamma(z) \Gamma(1 - z) = \frac{\pi}{\sin \pi z}$$
with $z = 1/3$, that $\Gamma(1/3) \Gamma(2/3) \sqrt{3} = 2\pi$. Therefore:
$$\eta(\rho)^2 = \frac{3^{1/4} \Gamma(1/3)^3}{4\pi^2 e^{\pi i/12}}.$$

The values $\tau = \frac{-1 + \sqrt{-N}}{2}$, where $N = 7, 11, 19, 27, 43, 67, 163$, are such that $J(\tau)$ is rational, are as in \eqref{Chudnovsky} and lie in $C_{J/(J - 1), \rho}$. So the theorem above holds if $\tau = \frac{-1 + \sqrt{-N}}{2}$, $N = 7, 11, 19, 27, 43, 67, 163$. We state all the possible identities.

$\tau = \frac{-1 + \sqrt{-7}}{2}$:
\begin{align*}
&\frac{40\pi^3}{189\Gamma(1/3)^6} 7^{1/6} = \\
&-\frac{40}{567} {}_{2}F_{1}\bigg(\frac{1}{12}, \frac{7}{12}; \frac{2}{3}; \frac{125}{189}\bigg)^2 \\
&\quad+ \frac{500}{15309} {}_{2}F_{1}\bigg(\frac{1}{12}, \frac{7}{12}; \frac{2}{3}; \frac{125}{189}\bigg) \\
&\qquad \times {}_{2}F_{1}\bigg(\frac{13}{12}, \frac{19}{12}; \frac{5}{3}; \frac{125}{189}\bigg).
\end{align*}

$\tau = \frac{-1 + \sqrt{-11}}{2}$:
\begin{align*}
&\frac{128\pi^3}{693\Gamma(1/3)^6} 11^{1/6} 7^{1/3} = \\
&-\frac{48}{539} {}_{2}F_{1}\bigg(\frac{1}{12}, \frac{7}{12}; \frac{2}{3}; \frac{512}{539}\bigg)^2 \\
&\quad+ \frac{288}{41503} {}_{2}F_{1}\bigg(\frac{1}{12}, \frac{7}{12}; \frac{2}{3}; \frac{512}{539}\bigg) \\
&\qquad \times {}_{2}F_{1}\bigg(\frac{13}{12}, \frac{19}{12}; \frac{5}{3}; \frac{512}{539}\bigg).
\end{align*}

$\tau = \frac{-1 + \sqrt{-19}}{2}$:
\begin{align*}
&\frac{256\pi^3}{513\Gamma(1/3)^6} 19^{1/6} = \\
&-\frac{112}{1539} {}_{2}F_{1}\bigg(\frac{1}{12}, \frac{7}{12}; \frac{2}{3}; \frac{512}{513}\bigg)^2 \\
&\quad+ \frac{224}{789507} {}_{2}F_{1}\bigg(\frac{1}{12}, \frac{7}{12}; \frac{2}{3}; \frac{512}{513}\bigg) \\
&\qquad \times {}_{2}F_{1}\bigg(\frac{13}{12}, \frac{19}{12}; \frac{5}{3}; \frac{512}{513}\bigg).
\end{align*}

$\tau = \frac{-1 + \sqrt{-27}}{2}$:
\begin{align*}
&\frac{640\pi^3}{6831 \Gamma(1/3)^6} 27^{1/6} 253^{1/3} = \\
&-\frac{3920}{64009} {}_{2}F_{1}\bigg(\frac{1}{12}, \frac{7}{12}; \frac{2}{3}; \frac{64000}{64009}\bigg)^2 \\
&\quad+ \frac{84000}{4097152081} {}_{2}F_{1}\bigg(\frac{1}{12}, \frac{7}{12}; \frac{2}{3}; \frac{64000}{64009}\bigg) \\
&\qquad \times {}_{2}F_{1}\bigg(\frac{13}{12}, \frac{19}{12}; \frac{5}{3}; \frac{64000}{64009}\bigg).
\end{align*}

$\tau = \frac{-1 + \sqrt{-43}}{2}$:
\begin{align*}
&\frac{6400\pi^3}{24381\Gamma(1/3)^6} 43^{1/6} 21^{1/3} = \\
&-\frac{74560}{1536003} {}_{2}F_{1}\bigg(\frac{1}{12}, \frac{7}{12}; \frac{2}{3}; \frac{512000}{512001}\bigg)^2 \\
&\quad+ \frac{32000}{112347867429} {}_{2}F_{1}\bigg(\frac{1}{12}, \frac{7}{12}; \frac{2}{3}; \frac{512000}{512000}\bigg) \\
&\qquad \times {}_{2}F_{1}\bigg(\frac{13}{12}, \frac{19}{12}; \frac{5}{3}; \frac{512000}{512001}\bigg).
\end{align*}

$\tau = \frac{-1 + \sqrt{-67}}{2}$:
\begin{align*}
&\frac{56320\pi^3}{392553\Gamma(1/3)^6} 67^{1/6} 217^{1/3} = \\
&-\frac{9937840}{255552003} {}_{2}F_{1}\bigg(\frac{1}{12}, \frac{7}{12}; \frac{2}{3}; \frac{85184000}{85184001}\bigg)^2 \\
&\quad+ \frac{5324000}{3109848868443429} {}_{2}F_{1}\bigg(\frac{1}{12}, \frac{7}{12}; \frac{2}{3}; \frac{85184000}{85184001}\bigg) \\
&\qquad \times {}_{2}F_{1}\bigg(\frac{13}{12}, \frac{19}{12}; \frac{5}{3}; \frac{85184000}{85184001}\bigg).
\end{align*}

$\tau = \frac{-1 + \sqrt{-163}}{2}$:
\begin{align*}
&\frac{17075200\pi^3}{817710201\Gamma(1/3)^6} 163^{1/6} 185801^{1/3} = \\
&-\frac{11363838226240}{455794119168003} {}_{2}F_{1}\bigg(\frac{1}{12}, \frac{7}{12}; \frac{2}{3}; \frac{151931373056000}{151931373056001}\bigg)^2 \\
&\quad+ \frac{9495710816000}{9892775193720748560806619429} {}_{2}F_{1}\bigg(\frac{1}{12}, \frac{7}{12}; \frac{2}{3}; \frac{151931373056000}{151931373056001}\bigg) \\
&\qquad \times {}_{2}F_{1}\bigg(\frac{13}{12}, \frac{19}{12}; \frac{5}{3}; \frac{151931373056000}{151931373056001}\bigg).
\end{align*}

\section{Further work}

It is natural to apply the method of this paper to systematically derive Chudnovsky-Ramanajan type formulae for other families of elliptic curves, which we hope to do in our future work. For the interested reader, it is perhaps instructive to briefly discuss another example to give a sense of the generality of this method.

Consider the Legendre family of elliptic curves given by $y^2 = x(x - 1)(x - \lambda)$. The Picard-Fuchs differential equation for this family is well known and given by
$$\lambda(1 - \lambda) \frac{d^2 \Omega}{d\lambda^2} + (1 - 2\lambda) \frac{d\Omega}{d\lambda} - \frac{\Omega}{4} = 0,$$
which is a hypergeometric differential equation with parameters $a = 1/2$, $b = 1/2$, $c = 1$, and it has three regular singular points: $0, 1, \infty$.

Kummer's method yields six (distinct) hypergeometric solutions of the form
$$\lambda^\alpha (1 - \lambda)^\beta {}_{2}F_{1}\bigg(\frac{1}{2}, \frac{1}{2}; 1; \nu(\lambda)\bigg),$$
where $\nu(\lambda)$ is one of
$$\lambda, \quad 1 - \lambda, \quad \frac{1}{\lambda}, \quad \frac{1}{1 - \lambda}, \quad \frac{\lambda}{\lambda - 1}, \quad \frac{\lambda - 1}{\lambda}.$$
In particular, $\alpha = 0$ and $\beta = 0$ if $\nu(\lambda) = \lambda, 1 - \lambda$; $\alpha = -1/2$ and $\beta = 0$ if $\nu(\lambda) = 1/\lambda, (\lambda - 1)/\lambda$; $\alpha = 0$ and $\beta = -1/2$ if $\nu(\lambda) = 1/(1 - \lambda), \lambda/(\lambda - 1)$. Each of these solutions will be valid near one of the singular points $0, 1, \infty$ and will give rise to a hypergeometric representation of $\Omega_1$ in terms of $\lambda$. Applying the method used in this paper with $\lambda$ in place of $J$, one can derive Chudnovsky-Ramanujan type formulae corresponding to each hypergeometric representation of $\Omega_1$, which will be valid near one of the cusps $0, 1, \infty$.

In fact, according to \cite{Chudnovsky2}, some of Ramanujan's original formulae in \cite{Ramanujan2} are derived using the hypergeometric representations of the periods of the Legendre family. So this case was considered earlier than the level 1 case studied by D. V. and G. V. Chudnovsky. It would be interesting to do a complete determination using the method of this paper.

\end{document}